\documentclass[12pt,a4paper]{amsart}

\usepackage{amsmath}

\usepackage{amsfonts,caption}
\usepackage{amsmath,amsthm,amssymb}
\usepackage{color}

\newtheorem{Theorem}{Theorem}[section]
\newtheorem{Corollary}[Theorem]{Corollary}
\newtheorem{Definition}[Theorem]{Definition}

\newtheorem{Proposition}[Theorem]{Proposition}

\newtheorem{Lemma}[Theorem]{Lemma}

\def\Z{\mathbb{Z}}
\def\C{\mathbb{C}}

\def\mg{\mathfrak{g}}

\def\mh{\mathfrak{h}}
\def\mn{\mathfrak{n}}

\def\ba{\mathbf{a}}
\def\bm{\mathbf{m}}
\def\br{\mathbf{r}}

\def\m1{\mathbf{1}}
\def\sl{\mathfrak{sl}}
\def\gl{\mathfrak{gl}}
\def\p{\partial}

\def\a{\alpha}
\def\be{\beta}
\def\bS{\bar{S}_2}

\begin{document}
\title[Whittaker category]{The category of  Whittaker modules over the Cartan Type Lie algebra $\bar{S}_2$}

\author{Xiaoyao Zheng, Yufang Zhao, Genqiang Liu}

\date{}
\maketitle

\begin{abstract}
The  Lie algebra $\bar{S}_2$  of polynomial vector fields on $\mathbb{C}^2$  with constant divergence is an important Cartan type Lie algebra.
 In this paper, we study Whittaker $\bar{S}_2$-modules that are locally finite
 over $\textup{span}\{\frac{\partial}{\partial t_1}, \frac{\partial}{\partial t_2}\}$.
 We first show that each block $\Omega^{\widetilde{S}_2}_{\mathbf{a}}$ of the category of $(A_2, \bar{S}_2)$-Whittaker modules with finite-dimensional Whittaker vector spaces is equivalent to the category of finite-dimensional modules over the parabolic subalgebra $\bar{S}_2^{\geq 0}$. Then we classify all simple  Whittaker $\bar{S}_2$-modules in every block $\Omega^{\bar{S}_2}_{\mathbf{a}}$ . Finally, we establish an equivalence between  $\Omega^{\bar{S}_2}_{\mathbf{1}}$ and the category $H_{\mathbf{1}}$-fmod of finite-dimensional modules over an associative algebra
$H_{\mathbf{1}}$, whose generators are also determined. 
\end{abstract}

\noindent{\bfseries Keywords}: Whittaker module; Simple module; Cartan type Lie algebra; Skryabin's equivalence

\section{Introduction}
Since the 1970s, Whittaker modules have been introduced and have attracted considerable attention. As a class of modules that are not weight modules, they play a significant role both in the representation theory of Lie algebras and in physics.
 For a finite-dimensional complex simple Lie algebra $\mg=\mathfrak{n}^{-} \oplus \mathfrak{h} \oplus \mathfrak{n}^{+}$ and Lie algebra homomorphism $\eta: \mathfrak{n}^{+} \rightarrow \mathbb{C}$, a $\mg$-module is called a Whittaker module if $x- \eta(x)$ acts locally nilpotently on it for any $x\in \mathfrak{n}^{+}$.
 Whittaker modules over $\sl_2(\mathbb{C})$ were constructed   by Arnal and Pinzcon \cite{DG}.
Kostant introduced and studied Whittaker modules  for all finite-dimensional complex simple Lie algebras $\mg$. The map $\eta$ is called non-singular if $\eta(x_{\alpha})\neq 0$ for all simple root vectors of $\mg$.
He showed that simple non-singular Whittaker modules are in one-to-one correspondence with maximal ideals of the center
 $Z(\mg)$ of the universal enveloping algebra of $\mg$ \cite{B}. Further work related to singular Whittaker modules has been carried out by E. Backelin \cite{EB} and E. McDowell \cite{EM}. In \cite{CM}, it was proved that the category of Whittaker modules, for any reductive $\mg$, is extension full in the category of all $\mg$-modules.
 Whittaker modules have been extensively studied across various algebraic structures, including the Virasoro algebra and its related algebras \cite{DYLX,DYL,ME,MOE,BW}, quantum groups \cite{MO,AS,LXJ}, generalized Weyl algebras\cite{GM}, affine Lie algebras \cite{CGLW,CJ,KC}, and Lie superalgebras \cite{C}. For Lie algebras lacking triangular decomposition,
 P. Batra and V. Mazorchuk provided a general framework for studying Whittaker modules, covering simple Lie algebras of Cartan type. They described basic properties, including a block decomposition of the Whittaker category \cite{PV}.

The representation theory of infinite-dimensional Lie algebras has been extensively studied.
A key example is the Witt algebra  $W_{n}$, which is the derivation Lie algebra of the polynomial algebra $A_n=\mathbb{C}[t_1, \cdots, t_n]$.
A central topic in this field is the study of Harish-Chandra modules, which are weight modules with finite-dimensional weight spaces. For $W_1$, these modules were  classified by O. Mathieu \cite{OMa}. The theory of Harish-Chandra modules for $W_{n}$ has also been well developed.
 In 1974-1975, A. N. Rudakov studied  irreducible smooth representations of Cartan type Lie algebras over the formal power series $\C[[t_1,\dots,t_n]]$ (not over polynomials), see \cite{R1,R2}.
 In 1999, I. Penkov and V. Serganova determined the supports of all simple weight modules over $W_n$ and $\bar{S}_n$ \cite{IV}.
 Shen and Larsson independently introduced  the tensor $W_n$-modules $T(P,V)$ from a Weyl module $P$ and a $\gl_n$-module $V$ \cite{TAL,GS}.
 In \cite{GRK}, the simplicity of any $T(P,V)$ was completely determined.
 Y. Xue and R. L\"{u} obtained the classification of all simple bounded weight modules of $W_n$ \cite{YR}.
 Based on this result, D. Grantcharov and V. Serganova have shown that any simple Harish-Chandra $W_n$-module is isomorphic to a simple quotient of some $T(P,V)$ \cite{DV}.
 In \cite{YG}, the authors investigated the Whittaker category over $W_n$. For more related results, we refer to \cite{YV,XGRK,RK,YSu}.

For Cartan type Lie algebras not of type $W$, little progress has been made concerning their modules. 
Let $\bar{S}_n(n\geq2)$ denote the Lie algebra of polynomial vector fields on $\mathbb{C}^n$  with constant divergence. The simplicity of tensor modules over the subalgebra  $S_n$ of  $\bar{S}_n$ was studied in \cite{BXYK, HL2, XXZ}.
Simple Harish-Chandra modules over $\bar{S}_2$ were classified in \cite{HL1}.
However Whittaker modules over $\bar{S}_2$ have not been fully studied.

In this paper, we study the category of Whittaker modules for $\bar{S}_2$.
Denote $\Delta_2=\text{span}\{\frac{\partial}{\partial{t_1}},\frac{\partial}{\partial{t_2}}\}$, which is a commutative subalgebra of $\bS$.
Then $(\bS, \Delta_2)$ is a Whittaker pair in the sense of \cite{PV}.
An $\bS$-module $M$ is called a Whittaker module if $\Delta_2$ acts locally finitely on $M$.
We denote the semidirect product Lie algebra $\bS\ltimes A_2$ by $\widetilde{S}_2$.
 An $\widetilde{S}_2$-module $M$ is called an $(A_2, \bar{S}_2)$-module provided that the action of  $A_2$  on $M$ is associative.
Let $\Omega^{\bar{S}_2}_{\ba}$ (resp. $\Omega^{\widetilde{S}_2}_{\ba}$) be the category consisting of Whittaker $\bar{S}_2$ (resp. $(A_2, \bar{S}_2)$)-module $M$ of type $\phi_\ba$ with the finite-dimensional Whittaker vector spaces.
In section 3, we show that for any $\ba\in\C^2$, the category $\Omega^{\widetilde{S}_2}_{\ba}$ is equivalent to the category of finite-dimensional modules over $\bar{S}_2^{\geq 0}$, see Theorem \ref{cat-equ}.  In section 4, we first prove that $\Omega^{\bar{S}_2}_{\mathbf{a}}$ is equivalent to $\Omega^{\bar{S}_2}_{\mathbf{1}}$ for any $\mathbf{a}\neq \mathbf{0}$.
Moreover, any $M\in \Omega^{\bar{S}_2}_{\mathbf{1}}$, when restricted to $U(\mh)$, is a free
$U(\mh)$-module of finite rank, where
$\mh$ is a Cartan subalgebra of $\bS$, see Lemma \ref{basis}. We establish that any simple module $M$ in $\Omega^{\bar{S}_2}_{\mathbf{1}}$ is isomorphic to some simple quotient of $T(A^{\mathbf{1}}_{2},V)$ for some finite-dimensional simple $\gl_2$-module $V$, see Theorem \ref{simple-S}.
In section 5, we show that the  category $\Omega_{\mathbf{1}}^{\bS}$ is equivalent to the category of finite-dimensional modules over $H_{\mathbf{1}}$, the opposite algebra of the endomorphism algebra of a universal Whittaker module, see Theorem \ref{sk}. Furthermore, we show that
$H_{\mathbf{1}}$ is isomorphic to a tensor product factor $H$ of  the localized enveloping algebra  $U(\bar{S}_2)_{(-1)}$, and determine a set of generators for  $H$, see Corollary \ref{PBW}. Using Theorem \ref{simple-S}, we also show that any
 simple finite-dimensional $H$-module is isomorphic to
a simple quotient of some simple finite-dimensional $\gl_2$-module, see Theorem \ref{H-mod}.

In this paper, we denote by $\mathbb{Z}$, $\mathbb{N}$, $\mathbb{Z_+}$, $\mathbb{C}$ and $\mathbb{C}^*$ the sets of integers, positive integers, nonnegative integers, complex numbers, and nonzero complex numbers, respectively. All vector spaces and algebras are over $\C$. For a Lie algebra
$\mathfrak{g}$ we denote by $U(\mathfrak{g})$ its universal enveloping algebra. We write $\otimes$ for
$\otimes_{\mathbb{C}}$.

\section{Preliminaries}

In this section, we collect some necessary preliminaries, including
the Cartan type Lie algebra $\bar{S}_2$, $(A_2, \bar{S}_2)$-modules,
Whittaker modules.

\subsection{Cartan type Lie algebra $\bar{S}_2$}

Let $\mathbb{C}^n$ be the vector space of $n$-dimensional complex vectors. Denote its standard basis by $\{e_1, e_2, ..., e_n\}$.  Let $A_n$ be the polynomial algebra  $\mathbb{C}[t_1,t_2, \cdots ,t_n]$ in the commuting variables $t_1$, $t_2$, $\cdots$, $t_n$. For $\alpha \in \mathbb{Z}^n$, let $\alpha_i$ be the $i$-th componont of $\alpha$ and $|\alpha|=\sum^{n}_{i=1}\alpha_i$. For convenience, let $\partial_i=\frac{\partial}{\partial t_i}$ and $d_i=t_i\partial_i$ for $i=1, 2, \cdots, n$ and $t^{\alpha}=t^{\alpha_1}_1t^{\alpha_2}_2\cdots t^{\alpha_n}_n$, $\partial^{\alpha}=\partial^{\alpha_1}_1 \partial^{\alpha_2}_2\cdots \partial^{\alpha_n}_n$.
The Witt algebra $W_n$ is the Lie algebra of derivations of $A_n$, i.e.,
$$W_n=\text{span}\{t^{\alpha}\partial_i,\mid \alpha \in \mathbb{Z}_+^n, i=1, 2, \cdots, n\}.$$
It  has the following Lie bracket:
$$[t^{\a}\p_{i},t^{\be}\p_{j}]=
\be_it^{\a+\be-e_i}\p_{j}
-\a_jt^{\a+\be-e_j}\p_{i},$$
where all $\a,\be\in \Z_{+}^n,i,j=1,\dots,n$.

For $n \geq 2$, let $\bar{S}_n$ be  the Lie subalgebra of $W_n$
consisting of all derivations with constant divergence, i.e.,
$$\bar{S}_n=\{\sum^{n}_{i=1}p_i \partial_i \in W_n \mid \sum^{n}_{i=1}\partial_i(p_i) \in \mathbb{C}\}$$
with $p_i \in A_n$. Recall that $S_n=[\bar{S}_n,\bar{S}_n]$ is the simple ideal of codimension 1 in $\bar{S}_n$.

For any $\alpha \in \mathbb{Z}^2_{\geq-1}$, let $L_{\alpha}=(1+\alpha_2)t^{\alpha+e_1}\partial_1-(1+\alpha_1)t^{\alpha+e_2}\partial_2$, then
$$S_2=\oplus_{\alpha \in \Phi} L_{\alpha},$$
with brackets
$$[L_{\a},L_{\be}]=\left|\begin{array}{cc}
1+\a_2& 1+\a_1\\
1+\be_2& 1+\be_1
\end{array}\right|L_{\a+\be},\forall \a,\be\in\Phi,$$
where $\Phi=\mathbb{Z}^2_{\geq-1}\setminus\{(-1,-1)\}$. Note that $\bar{S}_2=\mathbb{C}d\ltimes S_2$ for $d=d_1+d_2$,  and the subspace $\mathfrak{h}=\mathbb{C}d_1\oplus \mathbb{C}d_2$ is the Cartan subalgebra of $\bar{S}_2$. Denote $h^{\bm}=d_1^{m_1}d_2^{m_2}$ for $\bm\in\Z_+^2$.
The Lie algebra $\bar{S}_2=\oplus_{i\in \Z_{\geq -1}}\bar{S}_2(i)$ is a $\Z$-graded Lie algebra, where $\bar{S}_2(i)=\{u\in \bar{S}_2(i)|[d,u]=iu\}$.
Note that  $\bar{S}_2(0)\cong \gl_2$.

Regarding $A_2$ as an $\bar{S}_2$-module via the derivation action, we define the semidirect product Lie algebra $\widetilde{S}_2:= \bar{S}_2 \ltimes A_2$.

\begin{Definition}
An $\bar{S}_2$-module $M$ is called a weight module if the action of $\mathfrak{h}$ on $M$ is diagonalizable, i.e., $M=\oplus_{\lambda \in \C^{2} }M_{\lambda}$, where
$$M_{\lambda}=\{v \in M \mid d_iv=\lambda_i v, i=1,2\}.$$
The space $M_{\lambda}$ is called the weight space with weight $\lambda$ and $\emph{supp}(M):= \{\lambda \in \C^2|M_{\lambda}\neq 0\}$ is called
the support of $M$.
\end{Definition}

\begin{Definition}\label{AS}
An $\widetilde{S}$-module $V$ is called $(A_2, \bar{S}_2)$-module if $A_2$ acts on $V$ associatively, i.e.,
$$t^{\alpha}(t^{\beta}v)=t^{\alpha+\beta}v, \, t^{0}v=v, \, \forall \, \alpha, \beta \in \mathbb{Z}^{2}_{+}.$$

\end{Definition}

\subsection{Whittaker module of $\bar{S}_2$}
Recall the definition of the Whittaker pair $(\mg,\mn)$ introduced by \cite{PV}.

\begin{Definition}
A pair $(\mg, \mn)$ is called a Whittaker pair if $\mn$ is a quasi-nilpotent subalgebra of the Lie algebra $\mg$ and
the adjoint action of $\mn$ on the $\mn$-module $\mg/\mn$ is locally nilpotent.
\end{Definition}

For Lie algebra $\bar{S}_2$, let $\Delta_2=\text{span}\{\partial_1, \partial_2\}$ which is an abelian subalgebra. We can check that the adjoint action of $\Delta_2$ on $\bar{S}_2 / \Delta_2$ is locally nilpotent, so $(\bar{S}_2,\Delta_2)$ is a Whittaker pair. An $\bar{S}_2$ or $\widetilde{S}_2$-module $M$ is called a Whittaker module if the action of $\Delta_2$ on $M$ is locally finite. For $\ba=(a_1,a_2) \in \mathbb{C}^2$, we can define a Lie algebra homomorphism $\phi_\ba:\Delta_2 \rightarrow \mathbb{C}$ satisfying $\phi_\ba(\partial_i)=a_i$ for $i=1,2$. For any Whittaker module $M$ and $\ba=(a_1,a_2) \in \mathbb{C}^2$, we can define the subspace
$$M(\ba)=\{v\in M\mid (x-\phi_\ba(x))^{k}v=0, \text{for some}\  k\in \mathbb{N}, \forall\  x\in \Delta_2\}$$
which is a submodule $M$. It is easy to see that $M=\oplus_{\ba\in \C^2} M(\ba)$.
A Whittaker module $M$ is of type $\phi_\ba$ if $M=M(\ba)$. We define the subspace of $M$ as follows:
$$\text{Wh}_\ba(M)=\{v\in M \mid xv=\phi_{\ba}(x)v, \forall \, x\in \Delta_2\}.$$
An element of $\text{Wh}_\ba(M)$ is called a Whittaker vector. For $\ba\in \mathbb{C}^{2}$, we call it non-singular if $a_i \neq 0$ for any $i=1,2$.

Let $\Omega^{\bar{S}_2}_{\ba}$ (resp. $\Omega^{\widetilde{S}_2}_{\ba}$) denote the category of Whittaker $\bar{S}_2$ (resp. $(A_2,\bar{S}_2)$)-module $M$ of type $\phi_\ba$ such that $\dim \text{Wh}_\ba(M)< +\infty$.

\section{The category of Whittaker $(A_2,\bar{S}_2)$-modules}
In this section, we study Whittaker $(A_2,\bar{S}_2)$-modules.
Let $\bar{S}_2^{\geq 0}:=\oplus_{i=0}^{+\infty}\bar{S}_2(i)$ which is a subalgebra of $\bar{S}_2$.
We  will establish an equivalence between $\Omega^{\widetilde{S}_2}_\ba$ and the category of finite-dimensional $\bar{S}_2^{\geq 0}$-modules.

\subsection{Whittaker module over $\mathcal{D}_2$}
Let $\mathcal{D}_2$ be the Weyl algebra over $\mathbb{C}$,  which is generated by $t_1$, $t_2$, $\partial_1$, $\partial_2$ with the relations
$$[t_i,t_j]=[\partial_i,\partial_j]=0, \,\,\,\,\,\,  [\partial_i,t_j]=\delta_{i,j}, \,\,\,\,  i,j=1,2.$$

\begin{Definition} For $\ba\in \C^2$,
 a $\mathcal{D}_2$-module $M$ is called a Whittaker module of type $\ba$ if for any $v\in M$ there is a $k\in \mathbb{N}$ such that $(\partial_i-a_i)^{k}v=0$ for $i=1,2$.
\end{Definition}

Let $\mathcal{V}^{D}_\ba$ be the category consisting of finite length Whittaker $\mathcal{D}_2$-modules of type $\ba$. Let $\sigma_\ba$ be the algebra automorphism of $\mathcal{D}_2$ defined by
$$t_i\rightarrow t_i, \, \partial_i \rightarrow \partial_i+a_i, \, i=1,2.$$
The $\mathcal{D}_2$-module $A_2$ can be twisted by $\sigma_\ba$ to be a new $\mathcal{D}_2$-module $A^{\ba}_2$ and the action of $\mathcal{D}_2$ on $A^{\ba}_2$ is defined by
$$x\cdot f(t)=\sigma_\ba(x)f(t), \, f(t)\in A_2, \, x\in \mathcal{D}_2.$$

\begin{Lemma}\label{Weyl-module} \emph{\cite{YG}}
(a) Any simple module in $\mathcal{V}^{D}_\ba$ is isomorphic to $A^{\ba}_2$.

(b) The category $\mathcal{V}^{D}_\ba$ is semi-simple.
\end{Lemma}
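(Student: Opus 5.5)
The first step is to reduce to the case $\ba=\mathbf{0}$ by twisting. If $M$ is a Whittaker $\mathcal{D}_2$-module of type $\ba$, let $M^{\sigma_{-\ba}}$ be $M$ with the action $x\cdot v=\sigma_{-\ba}(x)v$. On it, $\partial_i$ acts as $\partial_i-a_i$, so $M^{\sigma_{-\ba}}$ is of type $\mathbf{0}$, meaning each $\partial_i$ acts locally nilpotently. Twisting preserves submodules and lengths, and $(A_2)^{\sigma_\ba}=A^{\ba}_2$. It therefore suffices to treat $\ba=\mathbf{0}$, where the claimed simple object is $A_2$ with its natural action.

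For (a), take a simple $M$ in $\mathcal{V}^D_{\mathbf{0}}$. Since $\partial_1,\partial_2$ commute and act locally nilpotently, the space $M^0=\{v\in M:\partial_1v=\partial_2v=0\}$ is nonzero. Pick $0\neq v\in M^0$ and define $\pi:A_2\to M$ by $f\mapsto f(t)v$. This is a $\mathcal{D}_2$-homomorphism: $\partial_i(fv)=(\partial_if)v+f\partial_iv=(\partial_if)v$. The module $A_2$ is simple over $\mathcal{D}_2$, since repeated application of $\partial_i$ lowers any nonzero polynomial to a nonzero constant, and multiplication by the $t_i$ then recovers everything. The map $\pi$ is nonzero, so it is injective, and $M$ is simple, so it is surjective. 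Hence $M\cong A_2$.

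For (b), the plan is to show $\mathrm{Ext}^1(A_2,A_2)=0$ within the category; induction on length then gives semisimplicity. The cleanest route is to prove that for any Whittaker module $M$ of type $\mathbf{0}$, the natural map $\Phi:A_2\otimes M^0\to M$, $f\otimes v\mapsto fv$, is an isomorphism. This is the key step and the main obstacle.

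\emph{Injectivity of $\Phi$.} Suppose $\sum_\alpha t^\alpha v_\alpha=0$ with $v_\alpha\in M^0$. Choose $\beta$ maximal (say lexicographically, or of top total degree) among the indices with $v_\beta\neq0$. Applying $\partial^\beta$ leaves $\beta!\,v_\beta=0$, a contradiction.

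\emph{Surjectivity of $\Phi$.} I would use the one-variable Euler-type projector $P_i=\sum_{k\ge0}\frac{(-1)^k}{k!}t_i^k\partial_i^k$. This sum is finite on each vector by local nilpotence, and $\partial_iP_i=0$. Each $v$ satisfies $v=\sum_k \frac{t_i^k}{k!}P_i(\partial_i^kv)$, which one proves by induction on the nilpotency order of $\partial_i$ on $v$. Applying this for $i=1$ and then for $i=2$, and using that $P_2$ commutes with $\partial_1$ so that $P_1P_2$ lands in $M^0$, expresses $v$ in $A_2\cdot M^0$.

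With $\Phi$ an isomorphism, $M\cong A_2\otimes M^0$ as $\mathcal{D}_2$-modules, where $M^0$ carries the trivial action. Any basis of $M^0$ then gives a direct sum decomposition of $M$ into copies of $A_2$, and finite length forces $\dim M^0<\infty$. Since the category is closed under subquotients, it is semisimple.
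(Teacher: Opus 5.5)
The paper gives no proof of this lemma; it simply quotes it from \cite{YG}. Your argument is correct and self-contained, so the useful comparison is with the closest argument in the paper, Lemma \ref{T-M}.

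\textbf{Your proof.} The twist by $\sigma_{-\ba}$ correctly reduces everything to type $\mathbf{0}$. Part (a) then follows from the nonzero homomorphism $A_2\to M$, $f\mapsto fv$, where $v$ is a common null vector of $\partial_1,\partial_2$. Your injectivity step works with either choice of $\beta$, because both choices make $\beta$ componentwise maximal. The reconstruction identity $v=\sum_k \frac{t_i^k}{k!}P_i(\partial_i^k v)$ needs no induction: it follows at once from $\sum_{j+k=n}\frac{(-1)^j}{j!\,k!}=\delta_{n,0}$, since every term is already normally ordered.

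\textbf{Comparison with Lemma \ref{T-M}.} The map $\psi\colon A^{\ba}_2\otimes \mathrm{Wh}_{\ba}(M)\to M$ there is the $(A_2,\bar{S}_2)$-version of your $\Phi$. The two proofs differ as follows.
\begin{itemize}
\item The paper gets surjectivity of $\psi$ from the present lemma. You get surjectivity of $\Phi$ from the explicit projectors $P_i$, an argument in the spirit of the proof of Lemma \ref{basis}.
\item The paper gets injectivity from the Jacobson density theorem applied to the simple module $A^{\ba}_2$. You get it by applying $\partial^\beta$ to a maximal term.
\end{itemize}

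\textbf{What your route buys.} You prove $M\cong A_2\otimes M^0$ for every Whittaker $\mathcal{D}_2$-module of type $\mathbf{0}$, with no finite-length hypothesis. This is exactly what Lemma \ref{T-M} silently needs. An object of $\Omega^{\widetilde{S}_2}_{\ba}$ is only assumed to have $\dim \mathrm{Wh}_{\ba}(M)<\infty$, yet the paper asserts without justification that its restriction to $\mathcal{D}_2$ lies in $\mathcal{V}^D_{\ba}$. Your decomposition supplies that step: finite-dimensionality of the Whittaker space already forces finite length.
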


\subsection{The Whittaker category $\Omega^{\widetilde{S}_2}_{\ba}$}
Since  $A_2$ is a left  module algebra over the Hopf algebra $U(\bar{S}_2)$, we have the smash product algebra $A_2\# U(\bar{S}_2)$. An $(A_2,\bar{S}_2)$-module is actually a module
over $A_2\# U(\bar{S}_2)$.

The following isomorphism was given by Hu and Lu.

\begin{Theorem}\label{AS-iso}\emph{\cite{HL1}}
There is an associative algebra isomorphism
$$\phi:A_2\# U(\bar{S}_2)\rightarrow \mathcal{D}_2\otimes U(\bar{S}_2^{\geq 0})$$ defined by

$$\phi(t^{\be})=t^{\be}\otimes 1,\quad \phi(d_i)=d_i\otimes 1+1\otimes d_i,$$
$$\phi(L_{\alpha})=L_{\alpha}\otimes 1+ \sum_{0<r \leq \alpha+e_1+e_2} \binom{\alpha+e_1+e_2}{r}t^{r}\otimes L_{\alpha-r},$$
for $\be\in\Z_+^2,\a\in \mathbb{Z}^2_{\geq-1}\setminus\{(-1,-1)\}$.
\end{Theorem}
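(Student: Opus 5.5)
The plan is to verify directly that $\phi$ is a well-defined algebra homomorphism and then produce a two-sided inverse. For well-definedness, we use the presentation of $A_2\# U(\bar{S}_2)$: it is generated by $A_2$ and $\bar{S}_2$, subject to three families of relations. First, the relations of the commutative algebra $A_2$. Second, the Lie relations of $\bar{S}_2$. Third, the smash relations $[x, f] = x(f)$ for $x\in\bar{S}_2$ and $f\in A_2$.

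The first family is immediate, since $t^\beta\mapsto t^\beta\otimes 1$. For the smash relations, note that $1\otimes L_{\alpha-r}$ commutes with $t^\beta\otimes 1$. Hence $[\phi(L_\alpha),\phi(t^\beta)]=[L_\alpha,t^\beta]\otimes 1 = L_\alpha(t^\beta)\otimes 1$, using that $L_\alpha\in W_2\subset \mathcal{D}_2$ acts on $t^\beta$ by derivation. The same argument handles $d_i$.

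The main work is the Lie relations $[\phi(L_\alpha),\phi(L_\beta)] = c_{\alpha\beta}\phi(L_{\alpha+\beta})$, together with the brackets involving $d_i$. To organize this computation, I would rewrite $\phi(L_\alpha)$ in generating-function form. Conceptually, $\phi(x)=\sum_r \frac{t^r}{r!}\otimes (\partial^r x)_{\text{shift}}$ is the Taylor expansion of the vector field $x$ at a point. Indeed, $\binom{\alpha+e_1+e_2}{r}t^r L_{\alpha-r}$ is exactly the term obtained by expanding the coefficients of $L_\alpha$ as $(t+s)^{\alpha+e_i}$ and separating the $t$-part from the $s$-part, with the degree $-1$ part (the $r=\alpha+e_1+e_2$ terms involving $\partial_i$) kept inside $L_\alpha\otimes 1$. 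Equivalently, $\phi$ is the comorphism of the map $A_2\# U(W)\to \mathcal{D}_2\otimes U(W^{\ge0})$ coming from the translation identification $x\mapsto x(t+s)$. The bracket identity then reduces to the fact that $x\mapsto x(t+s)$ is a Lie homomorphism of vector fields in the variables $s$, with $t$-coefficients. The subtlety is that $\mathcal{D}_2$ contains the $\partial_t$ parts, which do not commute with the $t^r$ factors. One therefore has to check that the cross terms $[L_\alpha\otimes1, t^r\otimes L_{\beta-r}]$ exactly compensate the bracket of shifted terms. I expect this binomial identity check to be the main obstacle. I would first try reducing it to $\alpha,\beta$ with small entries, because $\bar{S}_2$ is generated by $\partial_i$, $d_i$ and a few degree-one elements; this reduces the required identities to a finite list.

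For bijectivity, define $\psi:\mathcal{D}_2\otimes U(\bar S_2^{\ge0})\to A_2\# U(\bar S_2)$ by $t_i\mapsto t_i$ and $\partial_i\mapsto \partial_i$. On $\bar S_2^{\ge0}$ it is defined by the triangular inversion $\psi(1\otimes L_\alpha)=L_\alpha-\sum_{r>0}\binom{\alpha+e_1+e_2}{r}t^r\,\psi(1\otimes L_{\alpha-r})$, proceeding by induction on $|\alpha|$. This is the inverse Taylor expansion at $-t$: $\psi(1\otimes x)=\sum_r \frac{(-t)^r}{r!}\partial^r x$. The key check is that $\psi(1\otimes x)$ commutes with $t_j$ and $\partial_j$. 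For $t_j$ this holds because the degree-$\ge0$ pieces only involve $\partial$ at nonnegative-degree coefficients; one must handle it carefully. For $\partial_j$, the expansion is designed so that it is translation invariant. Finally, $\phi\psi$ and $\psi\phi$ are the identity on generators, by the triangularity of the formulas, so $\phi$ is an isomorphism.
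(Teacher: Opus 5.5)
The paper gives no proof of this statement (it is quoted from Hu and Lu), so your proposal has to stand on its own. As written it has a genuine gap at its core. Your overall strategy is sound, and you handle the smash relations correctly. But the main step, $[\phi(x),\phi(y)]=\phi([x,y])$ for $x,y\in\bar S_2$, is only announced as ``the main obstacle''.

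The reduction you suggest for that step is not valid. $\phi$ is prescribed by an explicit formula on every $L_\alpha$, so checking the bracket identity for pairs of generators says nothing about whether $\phi(L_\gamma)$, for $|\gamma|$ large, agrees with the corresponding iterated commutator. The Jacobi identity does show that $\{x : [\phi(x),\phi(y)]=\phi([x,y])\ \text{for all } y\}$ is a subalgebra. So $x$ may be restricted to generators, but $y=L_\beta$ must stay arbitrary.

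Your translation picture closes the gap directly, with no binomial identities, once you add one observation. Put $\tau(y)=\sum_{r\in\mathbb{Z}_+^2}\frac{t^r}{r!}\otimes\partial^r y\in A_2\otimes\bar S_2$, with $\partial^r$ acting on coefficients. Split $\tau=\tau_{-1}+\tau_{\ge0}$ by degree in the second factor, so that $\tau_{-1}(x)=\sum_i x_i\otimes\partial_i$ and $\phi(x)=x\otimes 1+\tau_{\ge0}(x)$. Since $\partial^{r+e_i}y=[\partial_i,\partial^r y]$, differentiating the $A_2$-factor of $\tau(y)$ in $t_i$ is the same as bracketing the second factor with $\partial_i$. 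Together with $[x\otimes1,g\otimes u]=x(g)\otimes u$, this gives
\[
[x\otimes 1,\tau_{\ge0}(y)]=\sum_i x_i\,\partial_{t_i}\tau_{\ge0}(y)=\sum_i x_i\,[1\otimes\partial_i,\tau_{\ge1}(y)]=[\tau_{-1}(x),\tau_{\ge1}(y)].
\]
The bracket on the right is the $A_2$-bilinear one, which is also what $[\tau_{\ge0}(x),\tau_{\ge0}(y)]$ computes in $\mathcal{D}_2\otimes U(\bar S_2^{\ge0})$. The map $\tau$ is conjugation by the translation $s\mapsto s+t$, hence a Lie homomorphism, and $[\tau_{-1}(x),\tau_{-1}(y)]=0$. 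So the degree $\ge0$ part of $\tau([x,y])=[\tau(x),\tau(y)]$ is $[\tau_{-1}(x),\tau_{\ge1}(y)]+[\tau_{\ge1}(x),\tau_{-1}(y)]+[\tau_{\ge0}(x),\tau_{\ge0}(y)]$. This is exactly $[\phi(x),\phi(y)]-[x,y]\otimes 1$.

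For bijectivity, checking $\phi\psi$ and $\psi\phi$ on generators is enough only once $\psi$ is known to be an algebra homomorphism. Besides commutation with $t_j$ and $\partial_j$, this needs $[\Theta(x),\Theta(y)]=\Theta([x,y])$ for $x,y\in\bar S_2^{\ge0}$, where $\Theta(x)=\sum_r\frac{(-t)^r}{r!}\,\partial^r x$. You do not mention this. It does hold: $\Theta(x)$ commutes with $A_2$, so expanding and using the Leibniz rule $\partial^p[x,y]=\sum_{q+r=p}\binom{p}{q}[\partial^q x,\partial^r y]$ leaves only $\sum_j y_j(0)\,\Theta([\partial_j,x])$. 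This vanishes because $y_j(0)=\sum_r\frac{(-t)^r}{r!}\partial^r y_j=0$.

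The same Taylor identity is the precise reason that $[\Theta(x),t_j]=x_j(0)=0$ on $\bar S_2^{\ge0}$; your justification of that point is too vague. Also, $\Theta(x)$ must keep the terms in which $\partial^r x$ has degree $-1$, for example $\Theta(d_1)=d_1-t_1\partial_1$. So your recursion is right only if its sum includes the two terms with $|\alpha-r|=-1$, that is, if it subtracts $(1+\alpha_2)t^{\alpha+e_1}\partial_1-(1+\alpha_1)t^{\alpha+e_2}\partial_2$. Otherwise $\psi(1\otimes L_{(0,0)})=L_{(0,0)}$, which does not commute with $t_1$.

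Relatedly, the displayed formula should be read with $r\in\mathbb{Z}_+^2$ and $|r|\le|\alpha|$. With that reading the $r=0$ term $1\otimes L_\alpha$ is present, as it must be (compare $\phi(d_i)$). The terms absorbed into $L_\alpha\otimes1$ are $r=\alpha+e_1$ and $r=\alpha+e_2$. They are not the $r=\alpha+e_1+e_2$ term, which vanishes since $L_{(-1,-1)}=0$.
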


For any $\mathcal{D}_2$-module $P$, and $\bar{S}_2 ^{\geq 0}$-module $V$, the tensor product $P\otimes V$ becomes an $(A_2,\bar{S}_2)$-module $T(P,V)$ under the map
$\phi$. Let $\bar{S}_2 ^{\geq 0}\text{-fmod}$ denote the category of finite-dimensional $\bar{S}_2 ^{\geq 0}$-modules.
One can verify that for any finite-dimensional $\bar{S}_2 ^{\geq 0}$-module $V$, the module $T(A_2^{\ba}, V)\in\Omega_{\ba}^{\widetilde{S}_2}$
and admits $V$ as its space of Whittaker vectors, i.e., $\text{Wh}_{\mathbf{a}}(T(A_2^{\ba}, V)) = V$.

\begin{Lemma}\label{T-M}
For any $M\in \Omega^{\widetilde{S}_2}_{\ba}$, the Whittaker vector subspace $\emph{Wh}_{\ba}(M)$ is an $\bar{S}_2 ^{\geq 0}$-module $V$ and $T(A^{\ba}_2,V) \cong M$.
\end{Lemma}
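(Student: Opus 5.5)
The plan is to move everything through the isomorphism $\phi$ of Theorem \ref{AS-iso}. This turns $M$ into a module over $\mathcal{D}_2\otimes U(\bar{S}_2^{\geq 0})$, and the two tensor factors commute. We then decompose $M$ as a $\mathcal{D}_2$-module, using Lemma \ref{Weyl-module}, and recover the $\bar{S}_2^{\geq 0}$-action from the multiplicity space.

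First we check how $\Delta_2$ acts after applying $\phi$. We have $\p_i=L_{-e_j}$ up to sign, where $\{i,j\}=\{1,2\}$. Theorem \ref{AS-iso} then gives $\phi(\p_i)=\p_i\otimes 1$, because the sum over $0<r\le \a+e_1+e_2$ is empty when $\a+e_1+e_2=e_i$ is replaced by the correct index; only $r=e_i$ would remain, and then $L_{\a-r}=L_{(-1,-1)}$, which is zero. Hence on $M$ the Weyl algebra element $\p_i$ acts as the original $\p_i$, and $M$ is a Whittaker $\mathcal{D}_2$-module of type $\ba$.

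Next, let $W=\{v\in M\mid \p_i v=a_i v,\ i=1,2\}$ for the $\mathcal{D}_2$-action. This is exactly $\text{Wh}_\ba(M)$. Since $1\otimes U(\bar{S}_2^{\geq 0})$ commutes with $\mathcal{D}_2\otimes 1$, the subspace $W$ is stable under $\bar{S}_2^{\geq 0}$, so $V:=W$ is an $\bar{S}_2^{\geq 0}$-module.

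We then need $M$ to be semisimple as a $\mathcal{D}_2$-module, which requires $M$ to have finite length. Every nonzero Whittaker $\mathcal{D}_2$-submodule of type $\ba$ contains a nonzero Whittaker vector: the commuting operators $\p_i-a_i$ act locally nilpotently. Therefore any strictly increasing chain of submodules gives a strictly increasing chain of subspaces $\text{Wh}_\ba$, and its length is bounded by $\dim V<\infty$. Any chain of submodules also has successive quotients that contain Whittaker vectors, but those vectors need not lift. This is the main obstacle. To get around it, I would first prove the case where $M$ is finitely generated: there Lemma \ref{Weyl-module} applies and shows that every finite-length Whittaker module of type $\ba$ is a direct sum of copies of $A_2^\ba$. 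Each copy contributes exactly one dimension of Whittaker vectors, namely the constants. For general $M$, I would take the sum $N$ of all simple $\mathcal{D}_2$-submodules. Each simple submodule is isomorphic to $A_2^\ba$, since it is generated by a Whittaker vector and so has finite length. The sum $N$ is a direct sum of at most $\dim V$ copies. Then $M/N$ would have a Whittaker vector $\bar v$ if it were nonzero, and $\mathcal{D}_2\bar v$ would be a finite-length module, hence semisimple. Lifting: $\mathcal{D}_2 v$ has finite length (its $\text{Wh}$ is finite-dimensional and it is finitely generated), so it is semisimple and lies in $N$. This contradicts $\bar v\neq 0$, so $M=N$.

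Finally, the multiplication map $A_2^\ba\otimes V\to M$, $f\otimes v\mapsto f\cdot v$, is a $\mathcal{D}_2$-isomorphism, because $M\cong A_2^\ba\otimes \text{Wh}_\ba(M)$ as $\mathcal{D}_2$-modules. It also intertwines $1\otimes U(\bar{S}_2^{\geq 0})$: for $x\in \bar{S}_2^{\geq 0}$, the element $1\otimes x$ commutes with $\mathcal{D}_2\otimes 1$, so $(1\otimes x)(f\cdot v)=f\cdot(xv)$. Therefore it is an isomorphism of $\mathcal{D}_2\otimes U(\bar{S}_2^{\geq 0})$-modules, and pulling back along $\phi$ gives $T(A_2^\ba,V)\cong M$.
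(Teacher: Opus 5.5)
You follow the paper's route: transport along $\phi$, use $[\mathcal{D}_2\otimes 1,\,1\otimes U(\bar{S}_2^{\geq 0})]=0$ to make $\mathrm{Wh}_{\ba}(M)$ an $\bar{S}_2^{\geq 0}$-module, and show that multiplication $A_2^{\ba}\otimes\mathrm{Wh}_{\ba}(M)\to M$ is an isomorphism. Your check that $\phi(\p_i)=\p_i\otimes 1$ is right in substance, though the indexing is off: $\p_1=L_{-e_1}$ and $\p_2=-L_{-e_2}$, and the only term of the sum is $r=e_j$ with $j\neq i$, which gives $L_{(-1,-1)}=0$.

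You are right that the paper's proof just asserts that $M$ restricted to $\mathcal{D}_2$ lies in $\mathcal{V}^D_{\ba}$, i.e.\ has finite length. However, your workaround does not get past the obstacle you flagged. The decisive claim, ``$\mathcal{D}_2 v$ has finite length (its $\mathrm{Wh}$ is finite-dimensional and it is finitely generated)'', is exactly the implication your chain argument failed to establish, because Whittaker vectors of subquotients need not lift. Likewise, ``$\mathcal{D}_2\bar v$ would be a finite-length module'' is given no reason. And ``first prove the finitely generated case, where Lemma \ref{Weyl-module} applies'' is unfounded, since that lemma is about finite-length modules, not finitely generated ones. So the semisimplicity of $M$ over $\mathcal{D}_2$ is not yet proved.

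The missing observation is one you nearly state yourself. A module generated by a Whittaker vector is a nonzero quotient of $\mathcal{D}_2/(\mathcal{D}_2(\p_1-a_1)+\mathcal{D}_2(\p_2-a_2))\cong A_2^{\ba}$, which is simple, so $\mathcal{D}_2\bar v\cong A_2^{\ba}$. Then:
\begin{itemize}
\item $\mathcal{D}_2 v+N$ is an extension of this simple module by $N$, which has length at most $\dim V$, so it has finite length.
\item By Lemma \ref{Weyl-module}(b) it is semisimple, so $\mathcal{D}_2 v+N=N\oplus C$ with $C\cong\mathcal{D}_2\bar v$ simple.
\item Then $C$ lies in the socle $N$, which forces $C=0$ and contradicts $\bar v\neq 0$.
\end{itemize}
With this your argument closes.

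Alternatively, you can avoid finite length entirely. Run the induction of Lemma \ref{basis} with $t_i$ in place of $d_i$, using $(\p_i-a_i)^k t_i^k w=k!\,w$ for $w\in\ker(\p_i-a_i)$. This shows directly that $M=\C[t_1,t_2]\,\mathrm{Wh}_{\ba}(M)\cong A_2^{\ba}\otimes\mathrm{Wh}_{\ba}(M)$ for every Whittaker $\mathcal{D}_2$-module of type $\ba$. That gives surjectivity and injectivity of the multiplication map at once, and fills the gap in the paper's own proof as well.
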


\begin{proof}
According to Theorem \ref{AS-iso}, we can view  $M$ as a module over $\mathcal{D}_2\otimes U(\bar{S}_2 ^{\geq 0})$.
Note that in $\mathcal{D}_2\otimes U(\bar{S}_2 ^{\geq 0})$, we have $[\mathcal{D}_2, \bar{S}_2 ^{\geq 0}]=0$. Therefore, for any $v \in \text{Wh}_{\mathbf{a}}(M)$, we have $\bar{S}_2 ^{\geq 0}v \subseteq \text{Wh}_{\mathbf{a}}(M)$,
which implies that $\text{Wh}_{\mathbf{a}}(M)$ is an $\bar{S}_2 ^{\geq 0}$-module. Next, we prove that the $\mathcal{D}_2\otimes U(\bar{S}_2 ^{\geq 0})$-module homomorphism
$$\aligned
\psi: A^{\ba}_2 \otimes \text{Wh}_{\ba}(M) &\rightarrow M,\\
t^{\a}\otimes v &\mapsto t^{\a}v, \a\in \mathbb{Z}^{2}_{+},
\endaligned$$
is an isomorphism.
Since  $M\in \mathcal{V}^{D}_\ba$ when it was retricted to $\mathcal{D}_2$,
by Lemma \ref{Weyl-module}, $\psi$ is surjective.
Suppose that $w=\sum_{\be\in \Lambda }t^{\be}\otimes v_{\be} \in \ker\psi$ ($\Lambda$ is a finite set). For any fixed $ \a\in \Lambda$, since $A^{\ba}_2$ is simple, and by Density theorem, there exists some $x\in \mathcal{D}_2$ such that $xt^{\a}=1$, $xt^{\be}=0$ for all $\be\neq \a$, $\be\in \Lambda$. Then we have $0=x \psi(w)=\psi(xw)=\psi(1\otimes v_{\a} )=v_{\a}$, i.e., $v_{\a}=0$, thus $w=0$. Hence $\psi$ is injective.
\end{proof}

\begin{Theorem}\label{cat-equ}
The functor
$$\aligned
\mathcal{F}: \bar{S}_2 ^{\geq 0}\emph{-fmod} &\rightarrow \Omega^{\widetilde{S}_2}_{\ba}\\
 V &\mapsto T(A^{\ba}_2, V)
\endaligned$$
is an equivalence of the two categories.
\end{Theorem}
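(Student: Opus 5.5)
We will show that $\mathcal{F}$ is well defined, essentially surjective, and fully faithful. Throughout we use the algebra isomorphism $\phi$ of Theorem \ref{AS-iso}. It identifies $(A_2,\bar{S}_2)$-modules with $\mathcal{D}_2\otimes U(\bar{S}_2^{\geq 0})$-modules, and under this identification $T(A^{\ba}_2,V)$ is simply the outer tensor product $A^{\ba}_2\otimes V$. On morphisms, $\mathcal{F}$ sends $f:V\to V'$ to $\mathrm{id}\otimes f$. This map commutes with $\mathcal{D}_2\otimes U(\bar{S}_2^{\geq 0})$ and hence with $A_2\# U(\bar{S}_2)$, so $\mathcal{F}$ is a functor.

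\emph{Well-definedness.} We need $T(A^{\ba}_2,V)\in\Omega^{\widetilde{S}_2}_{\ba}$ whenever $\dim V<\infty$. By Theorem \ref{AS-iso}, $\phi(\partial_i)=\phi(L_{-e_{3-i}})$ up to sign. For $\a=-e_j$ there is no $0<r\le \a+e_1+e_2$ except possibly $r$ killing the term, so a short check gives $\phi(\partial_i)=\partial_i\otimes 1$. Consequently $\partial_i$ acts on $A^{\ba}_2\otimes V$ as $(\partial_i+a_i)\otimes 1$. Since $\partial_i$ acts locally nilpotently on $A_2$, the module is Whittaker of type $\phi_\ba$. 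Its Whittaker vectors are $\ker(\partial_1)\cap\ker(\partial_2)\otimes V=1\otimes V$, which is finite-dimensional. This is the claim already stated before Lemma \ref{T-M}; the only computation involved is confirming $\phi(\partial_i)=\partial_i\otimes 1$.

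\emph{Essential surjectivity.} This is exactly Lemma \ref{T-M}: for $M\in\Omega^{\widetilde{S}_2}_{\ba}$ we have $V=\mathrm{Wh}_{\ba}(M)$, which is finite-dimensional and an $\bar{S}_2^{\geq 0}$-module, and $M\cong T(A^{\ba}_2,V)$.

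\emph{Full faithfulness.} Let $f:V\to V'$ with $\mathrm{id}\otimes f=0$. Restricting to $1\otimes V=\mathrm{Wh}_{\ba}(T(A^{\ba}_2,V))$ gives $f=0$, so $\mathcal{F}$ is faithful. For fullness, let $g:T(A^{\ba}_2,V)\to T(A^{\ba}_2,V')$ be a module map. Since $g$ commutes with $\Delta_2$, it maps Whittaker vectors to Whittaker vectors, so $g(1\otimes V)\subseteq 1\otimes V'$. This defines a linear map $f:V\to V'$ by $g(1\otimes v)=1\otimes f(v)$. The map $f$ is an $\bar{S}_2^{\geq 0}$-homomorphism because $g$ commutes with $1\otimes U(\bar{S}_2^{\geq 0})\subset\mathcal{D}_2\otimes U(\bar{S}_2^{\geq 0})$, and this subalgebra is contained in the image of $A_2\#U(\bar{S}_2)$ because $\phi$ is an isomorphism. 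Finally, $g$ commutes with $t^{\a}\otimes 1$, so
\[
g(t^{\a}\otimes v)=t^{\a}g(1\otimes v)=t^{\a}\otimes f(v).
\]
Since the elements $t^{\a}\otimes v$ span $T(A^{\ba}_2,V)$, it follows that $g=\mathrm{id}\otimes f=\mathcal{F}(f)$.

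The main obstacle is the well-definedness step. It requires verifying, from the explicit formula for $\phi$, that $\Delta_2$ corresponds to $\partial_i\otimes 1$; this is what lets us identify $\mathrm{Wh}_{\ba}$ with $1\otimes V$, and both the faithfulness and fullness arguments rest on that identification. Everything else follows formally from Lemma \ref{T-M} and Lemma \ref{Weyl-module}.
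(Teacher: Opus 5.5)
Your proof is correct. Its skeleton matches the paper's: well-definedness, essential surjectivity from Lemma \ref{T-M}, then full faithfulness. The difference lies in how you prove fullness.

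The paper argues structurally. Since $A^{\ba}_2$ is a simple $\mathcal{D}_2$-module, Schur's lemma (really Dixmier's, for this infinite-dimensional module) gives $\mathrm{End}_{\mathcal{D}_2}(A^{\ba}_2)=\C$. It then asserts $\mathrm{Hom}_{\mathcal{D}_2\otimes U(\bar{S}_2^{\geq 0})}(A^{\ba}_2\otimes V,A^{\ba}_2\otimes W)\cong\mathrm{Hom}_{U(\bar{S}_2^{\geq 0})}(V,W)$ without further detail.

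You instead use the Whittaker structure. A module map preserves $\mathrm{Wh}_{\ba}$, which you identified with $1\otimes V$. Since $1$ generates $A^{\ba}_2$ under multiplication by the $t^{\a}$, the map $g$ is determined by its restriction to $1\otimes V$, hence equals $\mathrm{id}\otimes f$.

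Your route is more elementary and self-contained. It needs no Schur/Dixmier lemma, and it exhibits $\mathrm{Wh}_{\ba}$ explicitly as the quasi-inverse, consistent with Lemma \ref{T-M}. It also supplies the check $\phi(\p_i)=\p_i\otimes 1$, which the paper leaves as ``one can verify''. The paper's argument is shorter, and it would work verbatim for any simple $\mathcal{D}_2$-module with scalar endomorphisms in place of $A^{\ba}_2$, without knowing its Whittaker vectors.

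One small slip: from the definition of $L_\a$ one gets $\p_1=L_{(-1,0)}=L_{-e_1}$ and $\p_2=-L_{(0,-1)}=-L_{-e_2}$. So the index should be $-e_i$, not $-e_{3-i}$. For $\a=-e_i$ the only $r$ in the sum is $r=e_{3-i}$. It contributes $t^{e_{3-i}}\otimes L_{(-1,-1)}=0$, which is the precise form of your ``killing the term'' remark.
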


\begin{proof}
 According to Lemma \ref{T-M}, for any $M\in \Omega^{\widetilde{S}_2}_{\ba}$, we have that $\mathcal{F}(V)\cong M$, where $V=\text{Wh}_{\ba}(M) $.
 Since $A^{\ba}_2$ is a simple  $\mathcal{D}_2$-module, by Schur's Lemma, we have $\text{End}_{\mathcal{D}_2}(A^{\ba}_2)=\mathbb{C}$. 
 From $\text{Hom}_{\mathcal{D}_2\otimes U(\bar{S}_2^{\geq 0})}(A^{\ba}_2\otimes V, A^{\ba}_2\otimes W)\cong \text{Hom}_{U(\bar{S}_2^{\geq 0})}(V, W)$, we know that the homomorphism $$\mathcal{F}_{V,W}:\text{Hom}_{U(\bar{S}_2^{\geq 0})}(V,W) \rightarrow \text{Hom}_{\widetilde{S}_2}(\mathcal{F}(V),\mathcal{F}(W))$$
is an isomorphism, for any $V,W\in \bar{S}_2^{\geq 0}$-fmod, therefore $\mathcal{F}$ is a category equivalence.
\end{proof}

Let  $\bar{S}_2^{\geq 1}:=\oplus_{i=1}^{+\infty}\bar{S}_2(i)$.
The following Lemma is from \cite{HL1}.

\begin{Lemma}\label{gl2}
\begin{enumerate}
\item The linear map
$$\aligned
\pi : \bar{S}_2^{\geq 0} / \bar{S}_2^{\geq 1}
 &\rightarrow \gl_2,\\
L_{(0,0)}+ \bar{S}_2^{\geq 1} &\mapsto E_{11}-E_{22},\\
L_{(1,-1)}+ \bar{S}_2^{\geq 1} &\mapsto -2E_{12},\\
L_{(-1,1)}+\bar{S}_2^{\geq 1} &\mapsto 2E_{21},\\
d_2+ \bar{S}_2^{\geq 1} &\mapsto E_{22},
 \endaligned$$
is a Lie algebra isomorphism.
\item If $V$ is a finite-dimensional simple $\bar{S}_2^{\geq 0}$-module, then $\bar{S}_2^{\geq 1}V=0$, i.e., $V$ is a simple $\gl_2$-module.
\end{enumerate}
\end{Lemma}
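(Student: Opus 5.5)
Part (1) is a direct computation, so I will begin there. The grading operator is $d=d_1+d_2$, and $[d,L_\alpha]=|\alpha|L_\alpha$. Hence $\bar{S}_2(0)$ is spanned by $d_2$ (or $d_1$) together with the $L_\alpha$ having $|\alpha|=0$. These are $L_{(0,0)}=d_1-d_2$, $L_{(1,-1)}=-2t_1^2t_2^{-1}\cdot$? Let me correct that: with $\alpha=(1,-1)$ one gets $L_{(1,-1)}=0\cdot t^{(2,-1)}\partial_1-2t^{(1,0)}\partial_2=-2t_1\partial_2$. Similarly $L_{(-1,1)}=2t_2\partial_1$. So $\bar{S}_2(0)=\text{span}\{t_i\partial_j\}\cong\gl_2$ under $t_i\partial_j\mapsto E_{ij}$. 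Since $\bar{S}_2^{\geq1}$ is an ideal of $\bar{S}_2^{\geq0}$ and $\bar{S}_2^{\geq0}=\bar{S}_2(0)\oplus\bar{S}_2^{\geq1}$, the quotient is isomorphic to $\bar{S}_2(0)$. Composing with $t_i\partial_j\mapsto E_{ij}$ gives exactly the listed images: $L_{(0,0)}\mapsto E_{11}-E_{22}$, $-2t_1\partial_2\mapsto -2E_{12}$, $2t_2\partial_1\mapsto 2E_{21}$, and $d_2\mapsto E_{22}$. To confirm that this is a Lie algebra map, I only need that $t_i\partial_j\mapsto E_{ij}$ preserves brackets, which is the standard identity $[t_i\partial_j,t_k\partial_l]=\delta_{jk}t_i\partial_l-\delta_{li}t_k\partial_j$.

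For part (2), let $V$ be finite-dimensional and simple, and set $N=\bar{S}_2^{\geq1}$. First I observe that $d$ acts on $V$, so $V$ decomposes into generalized eigenspaces $V^{(\mu)}$ for $d$. Each element of $\bar{S}_2(i)$ maps $V^{(\mu)}$ into $V^{(\mu+i)}$. Because $V$ is finite-dimensional, only finitely many $\mu$ occur. Therefore every product of $m$ homogeneous elements of $N$ acts as zero once $m$ exceeds the spread of the eigenvalues. This shows that $N$ acts nilpotently: the associative subalgebra generated by the image of $N$ is nilpotent, because it is spanned by such products, each of total degree at least $m$.

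Next I define $V'=\{v\in V: Nv=0\}$. This space is nonzero by the nilpotency just established, for instance by Engel's theorem applied to the image of $N$, or simply by taking the last nonzero power $N^kV$. It is also $\bar{S}_2^{\geq0}$-stable, since $N$ is an ideal: for $x\in\bar{S}_2^{\geq0}$, $y\in N$ and $v\in V'$ we have $yxv=xyv+[y,x]v=0$. By simplicity $V'=V$, so $NV=0$, and $V$ is then a simple module over the quotient, which is $\gl_2$ by part (1).

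The main point needing care is the nilpotency argument. I expect it to go through via the $d$-eigenspace grading, which avoids the need to know that $N$ is finitely generated.
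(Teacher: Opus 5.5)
Your proof is correct. The paper gives no argument for this lemma; it quotes it from Hu and Lu. So there is no proof to compare against, and what you have written is a complete self-contained proof.

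In part (1) you identify $\bar{S}_2(0)=\mathrm{span}\{t_i\partial_j\}$, using $L_{(0,0)}=d_1-d_2$, $L_{(1,-1)}=-2t_1\partial_2$ and $L_{(-1,1)}=2t_2\partial_1$, and then use the standard isomorphism $t_i\partial_j\mapsto E_{ij}$. This is exactly what is needed.

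Part (2) is the standard argument. The ideal $N=\bar{S}_2^{\geq 1}$ shifts generalized $d$-eigenspaces upward, so it acts nilpotently on a finite-dimensional module. Its joint kernel is then nonzero, and it is $\bar{S}_2^{\geq 0}$-stable because $N$ is an ideal.

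Two cosmetic fixes are worth making:
\begin{enumerate}
\item Delete the false start in your computation of $L_{(1,-1)}$.
\item The eigenvalues of $d$ on $V$ are complex, so make ``spread'' precise. Choose $m$ larger than every positive integer difference $\mu'-\mu$ between eigenvalues of $d$ on $V$. Then a product of $m$ homogeneous elements of $N$ maps $V^{(\mu)}$ into $V^{(\mu+s)}$ with $s\geq m$, and that space is zero.
\end{enumerate}
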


By Theorem \ref{AS-iso} and Lemma \ref{gl2}, we know that for any finite-dimensional simple $\gl_2$-module $V$, we can define an $(A_2,\bar{S}_2)$-module structure on $T(A^{\ba}_2, V)$ as follows:
$$\aligned L_{\a}(p\otimes v) =&
L_{\a} p\otimes v+(1+\a_1)(1+\a_2)t^{\a}p\otimes(E_{11}-E_{22})v\\
&+\a_2(1+\a_2)t^{\a+e_1-e_2}p\otimes E_{21}v-\a_1(1+\a_1)t^{\a+e_2-e_1}p\otimes E_{12}v,\\
d_2(p\otimes v)=&d_2p\otimes v+p\otimes E_{22}v,\\
t^{\be}(p\otimes v)=&(t^{\be}p)\otimes v, \ \ \  \be\in\Z_+^2,\a\in \mathbb{Z}^2_{\geq-1}\setminus\{(-1,-1)\},\endaligned$$
where $p\in  A_2^{\ba}, v\in V$.

Combining Theorem \ref{cat-equ} with Lemma \ref{gl2}, we obtain the following description of simple Whittaker $(A_2,\bar{S}_2)$-modules with finite-dimensional Whittaker vector spaces.

\begin{Corollary}\label{AS-mod}
If $M$ is a simple Whittaker $(A_2, \bar{S}_2)$-module of type $\phi_{\ba}$ with $\dim \emph{Wh}_{\ba}(M) < +\infty$, then $M \cong T(A^{\ba}_2, V)$ for some simple finite-dimensional $\gl_2$-module $V$.
\end{Corollary}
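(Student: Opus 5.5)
The plan is to transport simplicity across the equivalence of Theorem \ref{cat-equ}, and then use Lemma \ref{gl2}(2) to identify the simple objects on the $\bar{S}_2^{\geq 0}$-side.

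First, I check that $M$ lies in $\Omega^{\widetilde{S}_2}_{\ba}$. By hypothesis it is a Whittaker $(A_2,\bar{S}_2)$-module of type $\phi_\ba$ with finite-dimensional Whittaker vector space, so it is an object of that category. Lemma \ref{T-M} then gives $M\cong T(A^{\ba}_2,V)$ with $V=\text{Wh}_{\ba}(M)$, which is a finite-dimensional $\bar{S}_2^{\geq 0}$-module.

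Next, I show that $V$ is simple as an $\bar{S}_2^{\geq 0}$-module. Suppose $0\neq W\subsetneq V$ is an $\bar{S}_2^{\geq 0}$-submodule. Since $A^{\ba}_2\otimes W$ is stable under $\mathcal{D}_2\otimes U(\bar{S}_2^{\geq 0})$, Theorem \ref{AS-iso} shows that $T(A^{\ba}_2,W)$ is an $(A_2,\bar{S}_2)$-submodule of $T(A^{\ba}_2,V)\cong M$. It is nonzero, and it is proper because $A^{\ba}_2\otimes W\neq A^{\ba}_2\otimes V$ as vector spaces. This contradicts the simplicity of $M$, so $V$ is simple. Equivalently, the equivalence $\mathcal F$ of Theorem \ref{cat-equ} preserves simple objects, but the direct argument avoids having to check that subobjects in $\Omega^{\widetilde{S}_2}_{\ba}$ coincide with submodules.

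Finally, since $V$ is a finite-dimensional simple $\bar{S}_2^{\geq 0}$-module, Lemma \ref{gl2}(2) gives $\bar{S}_2^{\geq 1}V=0$. Hence $V$ is a simple module over $\bar{S}_2^{\geq 0}/\bar{S}_2^{\geq 1}\cong\gl_2$ via the isomorphism $\pi$. The module structure on $T(A^{\ba}_2,V)$ is therefore the explicit one displayed before the corollary, and $M\cong T(A^{\ba}_2,V)$ for a simple finite-dimensional $\gl_2$-module $V$.

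There is no real obstacle here. The only point needing care is the properness of the submodule $A^{\ba}_2\otimes W$, and that follows immediately because the tensor product is over $\C$ with $A^{\ba}_2\neq 0$.
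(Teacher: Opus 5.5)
Your proof is correct and follows the paper's route. The paper obtains the corollary by combining the equivalence of Theorem \ref{cat-equ} (through Lemma \ref{T-M}) with Lemma \ref{gl2}(2). Your direct check, that a proper nonzero $\bar{S}_2^{\geq 0}$-submodule $W\subset V$ would yield the proper nonzero $(A_2,\bar{S}_2)$-submodule $T(A^{\ba}_2,W)$ of $M$, simply makes explicit the step that the equivalence sends simple objects to simple objects.
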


\section{Simple Whittaker $\bar{S}_2$-module}

In this section, we classify all simple modules in $\Omega^{\bar{S}_2}_\ba$
for any $\ba\in \C^2$. Denote $\mathbf{0}=(0,0), \m1=(1,1)$.

\subsection{Simple modules in $\Omega^{\bar{S}_2}_{\mathbf{0}}$}

\begin{Proposition} If $M$ is a simple module in $\Omega^{\bar{S}_2}_\mathbf{0}$, then $M$ is isomorphic to the simple $\bar{S}_2$-submodule $T_1(A_2, V)$  of $T(A_2,V)$ generated by $V$, where $V$ is a finite-dimensional simple $\gl_2$-module.
\end{Proposition}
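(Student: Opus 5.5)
The plan is to use the fact that for $\ba=\mathbf{0}$ the elements $\partial_1,\partial_2$ act locally nilpotently, so $M$ behaves like a module with a lowest-degree piece. I will reduce to the $\bar{S}_2^{\geq 0}$-structure on Whittaker vectors and then apply Lemma \ref{gl2}. Let $M$ be simple in $\Omega^{\bar{S}_2}_{\mathbf{0}}$. Then $W:=\text{Wh}_{\mathbf{0}}(M)=\{v\in M\mid \partial_1v=\partial_2v=0\}$ is nonzero and finite-dimensional. It is nonzero because $\partial_1,\partial_2$ commute and act locally nilpotently.

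\emph{Step 1: $W$ is a finite-dimensional $\bar{S}_2^{\geq 0}$-module.} Every element of $\bar{S}_2^{\geq 0}$ with degree $\ge 1$ has coefficients of degree $\ge 2$ in $t$. So $[\partial_i,\bar{S}_2(k)]\subseteq\bar{S}_2(k-1)$, and $W$ is not obviously stable under $\bar{S}_2^{\geq 0}$. Instead I use a filtration argument. Take $M_0:=W$ and look for a nonzero finite-dimensional subspace stable under $\bar{S}_2^{\geq 0}$ that is killed by $\partial_1,\partial_2$.

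Here is the concrete choice. For $v\in W$ and $x\in\bar{S}_2(k)$ we have $\partial_i xv=[\partial_i,x]v$, with $[\partial_i,x]\in\bar{S}_2(k-1)$. I induct on $k$ to show that $\bar{S}_2(k)$ maps a suitable subspace into $W$. First, $\bar{S}_2(0)W\subseteq W$, because $[\partial_i,\bar{S}_2(0)]\subseteq\Delta_2$, which kills $W$. Next, $\bar{S}_2(1)W\subseteq W$, because $[\partial_i,\bar{S}_2(1)]\subseteq\bar{S}_2(0)$, and it remains to show that $\bar{S}_2(0)W$ lands in the part killed by $\partial$. Since $\bar{S}_2(0)W\subseteq W$, this only gives $\partial_i xv\in W$, not $\partial_ixv=0$.

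So I will instead use finite-dimensionality directly. The operator $d=d_1+d_2$ preserves $W$, since $[\partial_i,d]=\partial_i$. Pick a $d$-eigenvector $w\in W$ whose eigenvalue $\lambda$ has minimal real part among the eigenvalues on $W$. For $x\in\bar{S}_2(k)$ with $k\ge1$, the vector $xw$ is a $d$-eigenvector of eigenvalue $\lambda+k$. Applying $\partial$ repeatedly lowers the eigenvalue by $1$ each time, and the last nonzero vector in such a chain lies in $W$. A cleaner route is to take $W'$, the sum of the generalized $d$-eigenspaces of $W$ whose eigenvalues have real part in a top window. Then I show that $\bar{S}_2^{\geq1}W'=0$ for the $d$-eigenspace of maximal real part: if $xw\neq0$, then some $\partial$-descendant of $xw$ is a Whittaker vector with eigenvalue real part exceeding the maximum. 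So $W_{\max}$ is killed by $\bar{S}_2^{\geq1}$ and is stable under $\bar{S}_2(0)\cong\gl_2$.

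\emph{Step 2: construct the map.} Choose a simple $\gl_2$-submodule $V\subseteq W_{\max}$, which exists because $W_{\max}$ is finite-dimensional. Form $T(A_2,V)$, with $\bar{S}_2^{\geq1}$ acting by zero. Then $V=1\otimes V$ generates the submodule $T_1(A_2,V)$, and I will show that $T_1(A_2,V)$ is simple. By construction $U(\bar{S}_2)\otimes_{U(\bar{S}_2^{\geq0})}V$ surjects onto $T_1(A_2,V)$ and also maps onto $M$, because $M$ is simple. Both modules are then quotients of this induced module by a unique maximal submodule, namely the largest submodule meeting $V$ trivially, so $M\cong T_1(A_2,V)$.

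The main obstacle is uniqueness of that maximal submodule, which amounts to the simplicity of $T_1(A_2,V)$ and to $V$ being the full degree-bottom of the induced module. I would prove it by showing that any nonzero submodule $N$ of $T_1(A_2,V)$ contains a nonzero element of $1\otimes V$. Apply $\partial_1,\partial_2$ to an element of $N$ to lower polynomial degree until the result lies in $1\otimes V$, using that $\partial_i$ acts on $T(A_2,V)$ as differentiation on the first factor. Simplicity of $V$ as a $\gl_2$-module then gives $N\supseteq V$, hence $N=T_1(A_2,V)$.
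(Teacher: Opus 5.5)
\textbf{There is a genuine gap.} The conclusion of Step 1 is false, and Step 2 depends on it.

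You claim that $\bar{S}_2^{\geq 1}$ annihilates the top $d$-eigenspace $W_{\max}$ of $W=\mathrm{Wh}_{\mathbf{0}}(M)$, but your reasoning does not give this. Suppose $w\in W_{\max}$ has eigenvalue $\lambda$ and $x\in\bar{S}_2(k)$ with $k\geq 1$. A Whittaker vector of the form $\partial^{\beta}xw\neq 0$ has eigenvalue $\lambda+k-|\beta|$, and nothing forces $|\beta|<k$. So maximality is never contradicted.

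A concrete counterexample is $M=T_1(A_2,\mathbb{C}^2)$. It lies in $\Omega^{\bar{S}_2}_{\mathbf{0}}$ and is simple by your own last paragraph. Here $W=1\otimes\mathbb{C}^2$ is a single $d$-eigenspace ($d$ acts by $1$). Yet
$L_{(1,0)}(1\otimes e_1)=2t_1\otimes(E_{11}-E_{22})e_1-2t_2\otimes E_{12}e_1=2t_1\otimes e_1\neq 0$,
and $\partial_1$ sends this straight back to $2(1\otimes e_1)\in W$.

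The same computation breaks Step 2. That $\bar{S}_2^{\geq 1}$ acts by zero on the tensor factor $V$ does not mean it kills $1\otimes V$ inside $T(A_2,V)$. The reason is that $\bar{S}_2$ acts through $\phi$, whose terms $t^{r}\otimes L_{\alpha-r}$ with $L_{\alpha-r}\in\bar{S}_2(0)$ survive. So there is no homomorphism $U(\bar{S}_2)\otimes_{U(\bar{S}_2^{\geq 0})}V\to T_1(A_2,V)$, nor to $M$, extending the identity of $V$. That induced module is also free over $\mathbb{C}[\partial_1,\partial_2]$, so it is the wrong parabolic for type $\mathbf{0}$. These modules are in general unbounded above in $d$-degree. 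What they have is a bottom, killed by $\Delta_2=\bar{S}_2(-1)$.

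\textbf{The repair, which is the paper's route, is to induce from $\Delta_2\oplus\bar{S}_2(0)$.} $W$ is killed by $\Delta_2$ by definition and is $\bar{S}_2(0)$-stable by your own first observation, so no $\bar{S}_2^{\geq 1}$-structure on $W$ is needed. To extract a simple $\mathfrak{gl}_2$-module, go to the bottom, as in your abandoned first attempt with minimal real part.

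\begin{enumerate}
\item Since $M=U(\bar{S}_2)W$ is simple, its generalized $d$-eigenvalues lie in a single coset $\mu+\mathbb{Z}$.
\item Every nonzero generalized eigenvector of eigenvalue $\nu$ has a nonzero $\partial$-descendant in $W$ of eigenvalue $\nu-j$ with $j\geq 0$. Hence the minimal eigenvalue $\lambda_0$ of $d$ on $W$ is the lowest $d$-degree of $M$, and $M^{\lambda_0}\subseteq W$.
\item PBW in the form $U(\bar{S}_2)=U(\bar{S}_2^{\geq 1})U(\bar{S}_2(0))U(\Delta_2)$ shows that $V:=M^{\lambda_0}$ is a simple $\mathfrak{gl}_2$-module on which $d$ is a scalar.
\item Both $M$ and $T_1(A_2,V)$ are quotients of $U(\bar{S}_2)\otimes_{U(\Delta_2\oplus\bar{S}_2(0))}V\cong U(\bar{S}_2^{\geq 1})\otimes V$. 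For $T_1(A_2,V)$ this uses that it is generated by $1\otimes V\cong V$, which is killed by $\partial_1,\partial_2$. The degree-zero part of this induced module is exactly $V$, so it has a unique maximal submodule.
\item Your $\partial$-lowering proof that $T_1(A_2,V)$ is simple is correct and finishes the argument.
\end{enumerate}

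This bottom-degree step is also what justifies the paper's terse assertion that $\mathrm{Wh}_{\mathbf{0}}(M)$ is a simple $\mathfrak{gl}_2$-module; a posteriori it equals $V$.
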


\begin{proof}
Since $M$ is a simple Whittaker $\bar{S}_2$-module in $\Omega^{\bar{S}_2}_\mathbf{0}$, then $M=U(\bar{S}_2)\text{Wh}_{\mathbf{0}}(M)$. We can see that $\bar{S}_2(0)=\text{span}\{d_1, d_2, -\frac{1}{2}L_{(1,-1)}, \frac{1}{2}L_{(-1,1)}\}$ is a subalgebra of $\bar{S}_2$. And for any $x\in \bar{S}_2(0)$, $v\in \text{Wh}_{\mathbf{0}}(M)$, one can check that $xv\in \text{Wh}_{\mathbf{0}}(M)$, hence $\text{Wh}_{\mathbf{0}}(M)$ is a finite-dimensional simple $\bar{S}_2(0)$-module. Notice that
$\bar{S}_2(0) \rightarrow \gl_2, t_i\p_j \mapsto E_{ij}, i,j=1,2$ is a Lie algebra isomorphism, thus $\text{Wh}_{\mathbf{0}}(M)$ is a finite-dimensional simple $\gl_2$-module.  Set $V=\text{Wh}_{\mathbf{0}}(M)$.
Therefore, $M$ is isomorphic to the unique simple quotient $L(V)$ of the parabolic Verma module $U(\bar{S}_2)\otimes_{U(\Delta_2\oplus \bar{S}_2(0))}V$. Note that in $T(A_2, V)$, $\p_1V=\p_2V=0$, and  $T_1(A_2,V)
=U(\bar{S}_2^{\geq 0})V$ is a simple $\bar{S}_2$-submodule of
$T(A_2, V)$ generated by $V$. Then $L(V)$ is isomorphic to
$T_1(A_2, V)$, and hence $M$ isomorphic to
$T_1(A_2, V)$.
\end{proof}

For an $\bS$-module $M$, and  an automorphism $\sigma$ of $\bS$, $M$ can be twisted by $\sigma$ to  be a new $\bS$-module $M^\sigma$.
As a vector space $M^\sigma=M$, the action of $\bS$ on $M^\sigma$ is defined by $u\cdot v= \sigma(u)v$, for all $u\in \bS, v\in M$.

\begin{Lemma}\label{omega-cat}
If $\ba\neq \mathbf{0}$, then $\Omega^{\bar{S}_2}_\ba$ is equivalent to $\Omega^{\bar{S}_2}_{\m1}$.
\end{Lemma}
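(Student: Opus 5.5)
Right after the preceding paragraph on twisted modules, the plan is to find an automorphism $\sigma$ of $\bS$ that preserves $\Delta_2$ and satisfies $\phi_\ba\circ\sigma|_{\Delta_2}=\phi_{\m1}$. Twisting by $\sigma$ then gives the equivalence. The natural candidates come from linear changes of coordinates: any $g\in GL_2(\C)$ acts on $\C^2$, hence on $A_2$ by substitution $t\mapsto g^{-1}t$, and conjugation by this algebra automorphism induces an automorphism of $W_2=\mathrm{Der}(A_2)$. Divergence is invariant under linear changes of coordinates, since $\mathrm{div}$ is conjugation-invariant for $GL_2$ (the Jacobian is constant). So this automorphism of $W_2$ preserves $\bS$. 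It also maps the constant vector fields $\Delta_2$ to themselves, acting on $\Delta_2\cong\C^2$ by $g$ (or by $(g^{-1})^{T}$, depending on conventions). Concretely, $\sigma(\p_i)=\sum_j c_{ji}\p_j$ for an invertible matrix $C=(c_{ij})$ that can be prescribed arbitrarily.

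First, choose $C$ so that $\phi_\ba(\sigma(\p_i))=1$ for $i=1,2$, i.e.\ $\sum_j a_j c_{ji}=1$ for $i=1,2$, which means $C^{T}\ba=\m1$. Since $\ba\neq\mathbf 0$ and $\m1\neq\mathbf 0$, $GL_2(\C)$ acts transitively on nonzero vectors, so such an invertible $C$ exists. For instance, if $a_1\neq0$ one can take explicit shears and scalings such as $t_1\p_1$-type rescalings combined with $t_2\p_1$-type shears, whose exponentials $\exp(\mathrm{ad}(\lambda t_2\p_1))$ are automorphisms because $\mathrm{ad}\,t_2\p_1$ is locally nilpotent on $\bS$, and $t_2\p_1\in\bS(0)$.

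Next, define the functor $M\mapsto M^{\sigma^{-1}}$ (with the direction chosen appropriately). In $M^{\sigma}$, $x\in\Delta_2$ acts as $\sigma(x)\in\Delta_2$. Hence $\Delta_2$ still acts locally finitely, and $(x-\phi_{\m1}(x))$ acts as $\sigma(x)-\phi_\ba(\sigma(x))$, which is locally nilpotent on $M\in\Omega^{\bS}_\ba$. Thus $M^\sigma$ is of type $\phi_{\m1}$ and $\mathrm{Wh}_{\m1}(M^\sigma)=\mathrm{Wh}_{\ba}(M)$, which is finite-dimensional. Morphisms are unchanged as linear maps, and twisting by $\sigma^{-1}$ gives the inverse functor, so the two categories are isomorphic, hence equivalent.

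The only step needing care is verifying that the coordinate change really preserves $\bS$ (the constant-divergence condition) and acts on $\Delta_2$ by the prescribed matrix. I would check this via the chain rule: for $\theta(f)(t)=f(Bt)$ and $D=\sum p_i\p_i$, the conjugate $\theta D\theta^{-1}$ has coefficient vector $B^{-1}p(Bt)$, whose divergence is $\mathrm{tr}(B^{-1}(Dp)(Bt)B)=(\mathrm{div}\,p)(Bt)$, which is constant. For constant $p$ this gives $p\mapsto B^{-1}p$, so taking $B^{-1}=C$ completes the argument.
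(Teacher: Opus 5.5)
Your proposal is correct and is essentially the paper's argument: you twist by an automorphism $\sigma$ of $\bar{S}_2$ coming from a linear change of coordinates, chosen so that $\phi_{\mathbf{a}}\circ\sigma=\phi_{\mathbf{1}}$ on $\Delta_2$ and hence $\mathrm{Wh}_{\mathbf{1}}(M^{\sigma})=\mathrm{Wh}_{\mathbf{a}}(M)$. The paper builds $\sigma$ case by case from the explicit exponentials $\exp(\mathrm{ad}(-b_1d_1-b_2d_2))$ (a scaling, when $a_1a_2\neq 0$) and $\exp(\mathrm{ad}(-t_2\partial_1))$ (a shear, when $a_1a_2=0$), whereas your single choice of $C\in GL_2(\mathbb{C})$ with $C^{T}\mathbf{a}=\mathbf{1}$, justified by your chain-rule check that divergence stays constant, covers every $\mathbf{a}\neq\mathbf{0}$ at once.
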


\begin{proof} First we have $\mathrm{exp}^{\mathrm{ad}(-b_1d_1-b_2d_2)}(\p_i)=e^{b_i}\p_i$ for $(b_1,b_2)\in \C^2$, $i=1,2$. In case that $a_1a_2\neq 0$, let $b_i=\mathrm{ln}a_i$. The module $M\in \Omega^{\bar{S}_2}_\ba$ can be twisted by the automorphism $\mathrm{exp}^{\mathrm{ad}(-b_1d_1-b_2d_2)}$ to be a module in $\Omega^{\bar{S}_2}_{\mathbf{1}}$. In case that $a_1a_2=0$, we may assume that
$a_1=1,a_2=0$. The module $M\in \Omega^{\bar{S}_2}_{(1,0)}$ can be twisted by the automorphism $\mathrm{exp}^{\mathrm{ad}(-t_2\p_1)}$ to be a module in $\Omega^{\bar{S}_2}_{\mathbf{1}}$.
\end{proof}

\subsection{Simple modules in $\Omega^{\bar{S}_2}_{\mathbf{1}}$}

By Lemma \ref{omega-cat}, when $\ba\neq \mathbf{0}$, we can assume $\ba=\mathbf{1}$.

\begin{Lemma}\label{basis}
If  $M$ is a Whittaker $\bar{S}_2$-module of type $\phi_{\mathbf{1}}$, then
$$M=U(\mathfrak{h})\emph{Wh}_{\mathbf{1}}(M)\cong U(\mathfrak{h})\otimes\emph{Wh}_{\mathbf{1}}(M).$$
\end{Lemma}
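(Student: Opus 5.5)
We must show two things about $M$: the multiplication map $U(\mh)\otimes \text{Wh}_{\m1}(M)\to M$, $h^{\bm}\otimes w\mapsto h^{\bm}w$, is injective, and its image is all of $M$. The whole argument rests on the commutation relations $[\p_i,d_j]=\delta_{ij}\p_i$. These give $\p_i d_i = (d_i+1)\p_i$, so for any polynomial $f$ we have
$$\p_1 f(d_1,d_2)=f(d_1+1,d_2)\p_1,\qquad \p_2 f(d_1,d_2)=f(d_1,d_2+1)\p_2 .$$
Consequently, on a Whittaker vector $w$ (where $\p_i w=w$), the operator $\p_i-1$ acts on $f(d)w$ as the difference operator $\Delta_i$ in the $i$-th variable:
$$(\p_1-1)f(d_1,d_2)w=\bigl(f(d_1+1,d_2)-f(d_1,d_2)\bigr)w .$$

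\textbf{Injectivity.} Suppose $\sum_{\bm} h^{\bm}w_{\bm}=0$ with the $w_{\bm}\in\text{Wh}_{\m1}(M)$ linearly independent. Choose $\bm$ maximal in the lexicographic order among those with a nonzero term. Apply $(\p_1-1)^{m_1}(\p_2-1)^{m_2}$. By the formula above this lowers total degree: it kills every $h^{\bm'}$ with $\bm'$ lexicographically smaller, and sends $h^{\bm}$ to the constant $m_1!\,m_2!$. Hence $m_1!\,m_2!\,w_{\bm}=0$, a contradiction. The same computation shows that for $v=\sum_{|\bm|\le N}h^{\bm}w_{\bm}$, applying $\p_i-1$ lowers the $d_i$-degree.

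\textbf{Surjectivity.} This is the main obstacle. Set $M_k=\{v\in M\mid (\p_i-1)^{k}v=0,\ i=1,2\}$. Since $M$ is of type $\phi_{\m1}$, we have $M=\bigcup_k M_k$, and it suffices to prove $M_k\subseteq U(\mh)\text{Wh}_{\m1}(M)$ by induction on $k$.

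The base case $k=1$ holds because $\p_1,\p_2$ commute, so $M_1=\text{Wh}_{\m1}(M)$. For the inductive step it is cleaner to induct on the pair $(k_1,k_2)$, using the spaces $\{v\mid(\p_1-1)^{k_1}v=(\p_2-1)^{k_2}v=0\}$, which are stable under both $\p_j$.

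Take $v$ with $(\p_1-1)^{k_1}v=0$. By induction, $u=(\p_1-1)v$ lies in $U(\mh)\text{Wh}$ and satisfies $(\p_1-1)^{k_1-1}u=0$, and $\p_2$ commutes with $\p_1$. Write $u=\sum f_j(d)w_j$ with the $w_j$ a basis of $\text{Wh}_{\m1}(M)$. The difference operator $\Delta_1$ is surjective on $\C[d_1,d_2]$ (take an antidifference in $d_1$), so we can choose polynomials $g_j$ with $\Delta_1 g_j=f_j$. Then
$$v'=v-\sum_j g_j(d)w_j$$
is annihilated by $\p_1-1$.

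We then need to correct $v'$ in the second variable without destroying this. Among elements of $U(\mh)\text{Wh}$ killed by $\p_1-1$, namely $\C[d_2]\otimes\text{Wh}$, the operator $\Delta_2$ is again surjective, and $\p_1-1$ commutes with $\p_2-1$. So we apply the same argument to $v'$ in the variable $d_2$, working inside $\ker(\p_1-1)$: $(\p_2-1)v'$ lies in $\ker(\p_1-1)$ and, by induction on $k_2$, in $U(\mh)\text{Wh}$, hence in $\C[d_2]\otimes\text{Wh}$. Subtracting $h(d_2)w$ with $\Delta_2 h$ equal to the appropriate polynomial leaves a vector killed by both $\p_1-1$ and $\p_2-1$, i.e.\ a Whittaker vector.

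The delicate point is organizing this double induction so that the inductive hypothesis really applies to $(\p_1-1)v$ and to $(\p_2-1)v'$. I would handle it by ordering the pairs $(k_1,k_2)$ lexicographically and checking at each step that the vector we pass to the hypothesis lies in the correct smaller space.
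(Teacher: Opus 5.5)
Your argument is correct, and it runs on the same mechanism as the paper's: on $\C[d_1,d_2]\cdot\text{Wh}_{\mathbf{1}}(M)$ the operator $\p_i-1$ acts as the forward difference in $d_i$. Spanning then follows by induction on the nilpotency order, and independence by a leading-term argument.

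The organization differs from the paper's. The paper peels off one variable at a time. It proves $M=\C[d_1]M_1\cong\C[d_1]\otimes M_1$ with $M_1=\ker(\p_1-1)$, using a top-down correction: if $(\p_1-1)^kv=0$, subtract $\frac{d_1^{k-1}}{(k-1)!}(\p_1-1)^{k-1}v$, which lowers the order by one. It then notes that $M_1$ is stable under $\p_2$ and $d_2$, since both commute with $\p_1$, so the same one-variable argument gives $M_1=\C[d_2]\otimes\text{Wh}_{\mathbf{1}}(M)$.

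You instead stay inside $U(\mh)\text{Wh}_{\mathbf{1}}(M)$. You apply the hypothesis to $(\p_1-1)v$ and integrate back using surjectivity of the difference operator on polynomials. This costs you a two-parameter induction, plus a use of injectivity to identify $\ker(\p_1-1)\cap U(\mh)\text{Wh}_{\mathbf{1}}(M)=\C[d_2]\text{Wh}_{\mathbf{1}}(M)$. The paper's factorization through $\ker(\p_1-1)$ avoids both, because every induction there is one-dimensional.

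The bookkeeping you flagged does close under lexicographic order:
\begin{itemize}
\item If $k_1\ge 2$, your corrected $v'$ lies in $\ker(\p_1-1)$, hence at some level $(1,K)$. That level is lexicographically below $(k_1,k_2)$ whatever $K$ is, so the hypothesis applies to $v'$ directly.
\item The second correction is only needed when $k_1=1$. There $(\p_2-1)v$ sits at level $(1,k_2-1)$, and the correction $h(d_2)w$ is still killed by $\p_1-1$.
\end{itemize}

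One small slip: for injectivity you should not assume the $w_{\bm}$ are linearly independent. The claim is that $\sum_{\bm}h^{\bm}w_{\bm}=0$ with arbitrary $w_{\bm}\in\text{Wh}_{\mathbf{1}}(M)$ forces every $w_{\bm}=0$. Your lex-maximal argument only uses $w_{\bm}\neq0$ for the chosen index, so it proves exactly that.
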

\begin{proof}
We first prove that $M=\C[d_1]M_1\cong \C[d_1]\otimes M_1$,
where $M_1=\{v\in M| \p_1v=v\}$. For nonzero $v\in M$, let $k$ be the smallest
positive integer such that $(\p_1-1)^kv=0$. We will show that $v\in \C[d_1]M_1$
by induction on $k$. When $k=1$, obviously $v\in M_1$. For any  $k>1$,
set $v'=(\p_1-1)^{k-1}v$ which belongs to $M_1$.
From $(\p_1-1)^{k-1}d_1^{k-1}v'=(k-1)!v'$, we see that
$$(\p_1-1)^{k-1}(v-\frac{d_1^{k-1}}{(k-1)!}v')=(\p_1-1)^{k-1}v-v'=0.$$
By the induction hypothesis, $v-\frac{d_1^{k-1}}{(k-1)!}v'\in \C[d_1]M_1$.
Consequently $v=v-\frac{d_1^{k-1}}{(k-1)!}v'+\frac{d_1^{k-1}}{(k-1)!}v'\in \C[d_1]M_1$.
Hence $M=\C[d_1]M_1$. Suppose that $u:=\sum_{i=0}^n d_1^i v_i=0$ where $v_i\in M_1$. Then $v_n=\frac{1}{n!}(\p_1-1)^nu=0$. Subsequently, it can be checked that
$v_{n-1}=\dots=v_0=0$. Therefore  $M\cong \C[d_1]\otimes M_1$.

Similarly, we can show that $M_1=\C[d_2]\mathrm{Wh}_{\mathbf{1}}(M)\cong \C[d_2]\otimes\mathrm{Wh}_{\mathbf{1}}(M)$. Then the proof can be completed.
\end{proof}

\begin{Corollary}\label{fin-r}
Any non-trivial $\bar{S}_2$-module $M\in \Omega^{\bar{S}_2}_{\mathbf{1}}$ is a free $U(\mathfrak{h})$-module of finite rank.
\end{Corollary}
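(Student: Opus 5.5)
This follows almost immediately from Lemma \ref{basis}. Let $M\in \Omega^{\bar{S}_2}_{\mathbf{1}}$ be non-trivial. Since $M$ is a Whittaker module of type $\phi_{\mathbf{1}}$, Lemma \ref{basis} gives
$$M=U(\mathfrak{h})\mathrm{Wh}_{\mathbf{1}}(M)\cong U(\mathfrak{h})\otimes \mathrm{Wh}_{\mathbf{1}}(M).$$
Here $U(\mathfrak{h})=\C[d_1,d_2]$ acts on the right-hand side by left multiplication on the first factor. So if $\{v_1,\dots,v_r\}$ is a basis of $\mathrm{Wh}_{\mathbf{1}}(M)$, then $M=\bigoplus_{i=1}^r U(\mathfrak{h})v_i$, and each summand is isomorphic to $U(\mathfrak{h})$. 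Thus $M$ is free over $U(\mathfrak{h})$ of rank $r=\dim \mathrm{Wh}_{\mathbf{1}}(M)$, which is finite by the definition of $\Omega^{\bar{S}_2}_{\mathbf{1}}$.

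One point needs care: the isomorphism in Lemma \ref{basis} must be an isomorphism of $U(\mathfrak{h})$-modules, not only of vector spaces. The map $U(\mathfrak{h})\otimes \mathrm{Wh}_{\mathbf{1}}(M)\to M$, $h\otimes v\mapsto hv$, is clearly $U(\mathfrak{h})$-linear, because $\mathfrak{h}$ is abelian and $d_1,d_2$ commute in $U(\bar{S}_2)$. Its injectivity is what the proof of Lemma \ref{basis} establishes, in two steps:
\begin{itemize}
\item $\C[d_1]\otimes M_1\cong M$, obtained by applying $(\p_1-1)^n$ to the top-degree term;
\item $\C[d_2]\otimes \mathrm{Wh}_{\mathbf{1}}(M)\cong M_1$, which requires $d_2$ to preserve $M_1$. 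This holds since $[\p_1,d_2]=0$.
\end{itemize}
Composing the two gives injectivity of the combined map $\C[d_1]\otimes\C[d_2]\otimes \mathrm{Wh}_{\mathbf{1}}(M)\to M$.

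Finally, "non-trivial" ensures the rank is positive. If $M\neq 0$, then $\mathrm{Wh}_{\mathbf{1}}(M)\neq 0$: take any $v\neq 0$ and apply suitable powers of $(\p_1-1)$ and then $(\p_2-1)$; since $\p_1,\p_2$ commute and act locally finitely with generalized eigenvalue $1$, this produces a nonzero common eigenvector. Hence $r\geq 1$ and $M$ is free of finite positive rank. There is no real obstacle; the only substantive check is the $U(\mathfrak{h})$-linearity and injectivity above, which is already contained in Lemma \ref{basis}.
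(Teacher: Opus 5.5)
Your proposal is correct and follows the paper's route. The paper gives no separate proof and treats the corollary as an immediate consequence of Lemma \ref{basis}: the $U(\mathfrak{h})$-linear isomorphism $U(\mathfrak{h})\otimes \mathrm{Wh}_{\mathbf{1}}(M)\cong M$ combined with $\dim \mathrm{Wh}_{\mathbf{1}}(M)<\infty$, with your extra checks (linearity, $d_2$ preserving $M_1$, and positive rank) being the points left implicit there.
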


Recall $S_2=\oplus_{\alpha \in \Phi} L_{\alpha}$, $\Phi=\mathbb{Z}^{2}_{\geq-1}\setminus\{(-1,-1)\}$.
Set $B_2=S_2\oplus \mathbb{C}z$ be a trivial center extension of $S_2$, then $B_2=\oplus_{\alpha \in \mathbb{Z}^{2}_{\geq-1}} l_{\alpha}$ where $l_{\alpha}=L_{\alpha}$ if $\alpha\in \Phi$, $l_{\alpha}=z$ if $\alpha=(-1,-1)$. It has the following Lie bracket:
$$[l_{\a},l_{\beta}]=\left|\begin{array}{cc}
1+\a_2 & 1+\a_1\\
1+\beta_2 & 1+\beta_1
\end{array}\right|l_{\a+\beta},\forall \a,\beta\in\mathbb{Z}^{2}_{\geq-1}.$$
Denote $\bar{B}_2=\mathbb{C}d \ltimes B_2$ with $[d,l_\a]=|\a|l_\a$, for $\a\in\mathbb{Z}^{2}_{\geq-1}$ and $\widetilde{B}_2=\bar{B}_2 \ltimes A_2$.
The definition of $(A_2,\bar{B}_2)$-module is similar to Definition \ref{AS}.

For any $\bar{B}_2$-module $V$, the tensor product $B_2 \otimes V$ is
naturally a $\bar{B}_2$-module. Define the action of $A_2$ on $B_2 \otimes V$ by $t^{\beta}(l_{\alpha} \otimes v)=l_{\alpha+\beta}\otimes v$, for all $\beta\in\mathbb{Z}^2_{+},\a\in\mathbb{Z}^{2}_{\geq-1}$.

\begin{Lemma}\emph{\cite{HL1}}
$B_2\otimes V$ is an $(A_2,\bar{B}_2)$-module.
\end{Lemma}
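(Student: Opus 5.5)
We have to verify that $B_2\otimes V$ is an $(A_2,\bar{B}_2)$-module, i.e. a module over the semidirect product $\widetilde{B}_2=\bar{B}_2\ltimes A_2$ on which $A_2$ acts associatively. Here $\bar{B}_2$ acts on $A_2$ by $l_\a\cdot t^\be$ and $d\cdot t^\be=|\be|t^\be$. To make this precise, we use the identification $l_\a\leftrightarrow L_\a$ for $\a\in\Phi$, and we let $z=l_{(-1,-1)}$ act on $A_2$ by $\p_1\p_2$-type degree lowering. Concretely, we will use the formula $[l_\a,t^\be]$ dictated by the bracket, which is $l_\a(t^\be)$ computed inside $W_2$, with $z$ acting trivially (it is central). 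With this convention, three identities have to be checked.

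\textbf{Associativity.} We have
$$t^{\be}(t^{\gamma}(l_\a\otimes v))=l_{\a+\be+\gamma}\otimes v=t^{\be+\gamma}(l_\a\otimes v),$$
and $t^0$ acts as the identity. This step is immediate.

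\textbf{Compatibility of $d$ with $A_2$.} We need $[d,t^\be]=|\be|t^\be$ on the module, that is,
$$d(t^\be w)-t^\be(dw)=|\be|\,t^\be w .$$
For $w=l_\a\otimes v$ the $V$-parts cancel, and the $B_2$-parts give $|\a+\be|-|\a|=|\be|$. So this holds.

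\textbf{The key identity.} It remains to prove
$$l_\gamma(t^\be w)-t^\be(l_\gamma w)=(l_\gamma\cdot t^\be)w$$
for $w=l_\a\otimes v$. On the left, the terms $l_\a\otimes l_\gamma v$ and $l_{\a+\be}\otimes l_\gamma v$ appear in the combination
$$l_{\a+\be}\otimes l_\gamma v-t^\be(l_\a\otimes l_\gamma v)=0,$$
so they cancel. What survives is
$$[l_\gamma,l_{\a+\be}]\otimes v-t^\be[l_\gamma,l_\a]\otimes v=(D(\gamma,\a+\be)-D(\gamma,\a))\,l_{\gamma+\a+\be}\otimes v,$$
where $D(\gamma,\a)=(1+\gamma_2)(1+\a_1)-(1+\gamma_1)(1+\a_2)$. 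This simplifies to
$$\big((1+\gamma_2)\be_1-(1+\gamma_1)\be_2\big)\,l_{\gamma+\a+\be}\otimes v .$$
On the right, we compute $L_\gamma(t^\be)$ directly:
$$(1+\gamma_2)\be_1t^{\gamma+\be}-(1+\gamma_1)\be_2t^{\gamma+\be},$$
and $t^{\gamma+\be}$ acting on $l_\a\otimes v$ gives $l_{\a+\be+\gamma}\otimes v$. The two sides therefore agree. When $\gamma=(-1,-1)$ the coefficient vanishes, consistent with $z$ acting trivially on $A_2$.

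\textbf{Main obstacle.} The only delicate point is bookkeeping: indices such as $\a+\be+\gamma$ could fall outside $\mathbb{Z}^2_{\geq-1}$. This cannot happen, because $\be\in\mathbb{Z}_+^2$. We also need that the coefficient-vanishing cases are consistent with the convention $l_\a=0$ where appropriate. Once the determinant identity is checked, the module axioms follow. Note that $B_2\otimes V$ is a $\bar{B}_2$-module as a tensor product of the adjoint module and $V$.
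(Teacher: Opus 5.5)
Your direct verification is correct, and since the paper gives no argument of its own here (it only cites \cite{HL1}), checking associativity, the relation $[d,t^\be]=|\be|t^\be$, and the identity $l_\gamma(t^\be w)-t^\be(l_\gamma w)=(L_\gamma\cdot t^\be)\,w$ on $w=l_\a\otimes v$ is exactly the expected proof. Conceptually, under $l_\a\leftrightarrow t^{\a+\mathbf{1}}$ the algebra $B_2$ is $A_2$ with its Poisson bracket and $t^\be$ acts by multiplication, so your key identity is just the Leibniz rule.

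Two small slips need fixing.

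\emph{The action of $z$.} The trivial action of $z$ on $A_2$ is justified because $\C z$ is an ideal of $\bar B_2$ with $\bar B_2/\C z\cong\bar S_2$. It is not justified by centrality: $z$ is not central in $\bar B_2$, since $[d,z]=-2z$. Also delete the stray remark about $z$ acting by ``degree lowering''. For $\gamma=(-1,-1)$ the left side of your identity is $0$, so only the trivial action is compatible.

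\emph{Index ranges.} Contrary to your remark, $\gamma+\a+\be$ \emph{can} leave $\Z^2_{\geq-1}$, for example when $\gamma=\a=(-1,0)$ and $\be=\mathbf{0}$. This is harmless only because the relevant structure constants $D(\gamma,\a)$ and $D(\gamma,\a+\be)$, and the coefficient in $L_\gamma(t^\be)$, vanish in those cases.
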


We know that any $(A_2,\bar{S}_2)$ (resp. $\bar{S}_2$)-module can be regarded as an $(A_2, \bar{B}_2)$ (resp. $\bar{B}$)-module on which $l_{(-1,-1)}$ acts trivially.
Let us now recall the $(A_2,\bar{B}_2)$-cover from \cite{YVF}, but in a slightly different form.

Define a linear map
$$\aligned f: B_2\otimes V &\rightarrow V \\
l_{\alpha}\otimes v &\mapsto l_{\alpha}v, l_{\alpha}\in B_2,  v\in V,\endaligned$$
then $f$ is a $\bar{B}_2$-module homomorphism. Define
$$K(V)=\{w\in \ker f \, | \, A_{2}w\in \ker f\},$$
which is an $(A_2, \bar{B}_2)$-submodule of $B_2\otimes V$. Let $\widehat{V}=(B_2\otimes V)/ K(V)$, which is called the $(A_2,\bar{B}_2)$-cover of $V$ if $\bar{B}_2V=V$. It is natural that $f$ induces a $\bar{B}_2$-module homomorphism $\hat{f}: \widehat{V} \rightarrow V$. For any $X\in B_2$, $v\in V$, denote the image of $X\otimes v$ in $\widehat{V}$ by $X\boxtimes v$.

To study simple Whittaker $\bar{S}_2$-modules, we recall the weighting functor defined in \cite{N}. Let $I_{\br}=\langle d_1-r_1, d_2-r_2\rangle$ be a maximal ideal of $U(\mathfrak{h})$, where $\br=(r_1,r_2)\in \mathbb{C}^{2}$. Let $M$ be an $\bar{S}_2$-module, then
$$\mathcal{W}(M):=\bigoplus_{\br\in \mathbb{Z}^{2}}(M/I_{\br}M)$$
becomes a weight $\bar{S}_2$-module under the action
$$L_{\alpha}(v+I_{\br}M)=L_{\alpha}v+I_{\br+\alpha}M, v\in M.$$

For any $j=1,2$, $\alpha, \beta \in \mathbb{Z}^{2}_{\geq -1}$, $m\in\mathbb{Z}_{+}$, we define the following  operator
$$\sigma^{m,j}_{\alpha,\beta}=\sum^{m}_{i=0}(-1)^{i} \binom{m}{i} l_{\alpha+(m-i)e_{j}} l_{\beta+ie_{j}}.$$

\begin{Lemma}\label{bound}\emph{\cite{HL1}}
Suppose that $V$ is a bounded weight $\bar{B}_2$-module with $\dim(V_{\lambda})\leq r$ for all $\lambda\in \emph{supp}(V)$ and some $r\in \mathbb{Z}_{+}$. Then there is an $m\in \mathbb{Z}_{+}$ such that $\sigma^{m,j}_{\alpha,\beta}V=0$ for all $\alpha,\beta \in \mathbb{Z}^{2}_{\geq -1}$, $j=1,2$.
\end{Lemma}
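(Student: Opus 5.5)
The plan is the standard argument from the annihilation-of-bounded-modules theory, as for $W_n$. Let $V$ be a bounded weight $\bar{B}_2$-module with all weight spaces of dimension at most $r$, fix $j$, and consider $V$ as a module over the subalgebra generated by the $l_{\alpha+ke_j}$. The idea is that $\sigma^{m,j}_{\alpha,\beta}$ is a finite difference operator of order $m$ in the index $k$ applied to the family of products $l_{\alpha+(m-i)e_j}l_{\beta+ie_j}$. We show that such products, restricted to maps $V_\lambda\to V_{\lambda+\alpha+\beta+me_j}$, depend polynomially on the shift with bounded degree, so a high enough difference kills them.

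First we reduce to finite-dimensional data. Fix a weight $\lambda$ and consider the operators
\[
X_{k}=l_{\alpha+(s-k)e_j}\,l_{\beta+ke_j}\colon V_\lambda\to V_{\lambda+\alpha+\beta+se_j},\qquad 0\le k\le s .
\]
Each is a linear map between spaces of dimension at most $r$, so it lies in a space of dimension at most $r^2$. Next we use the bracket relations to derive a recursion. The key identity comes from commuting with the degree-one elements $l_{e_j-e_{j'}}$ (or with $l_{e_j-e_{j'}}$ together with $d$). These shift the index along $e_j$, and the structure constants in the determinant bracket are linear in the indices. As a result, the operator-valued functions $k\mapsto l_{\alpha+ke_j}$ satisfy linear relations whose coefficients are polynomial in $k$ of degree at most one.

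Given this, the standard trick used by Billig--Futorny and Xue--Lü applies. Consider the $\mathfrak{sl}_2$-type subalgebra spanned by $l_{-e_{j'}+e_j}$, $l_{(0,0)}$ and $l_{e_{j'}-e_j}$. The span of $\{l_{\alpha+ke_j}\}$ for $\alpha$ fixed modulo $e_j$ forms a module over the Lie algebra $\mathcal{L}=\mathrm{span}\{l_{ke_j-e_{j'}}\}$, which is a Witt-type algebra in one variable (isomorphic to a subalgebra of $W_1$ of the form $t^{k}\partial$). Restricting $V$ to this rank-one Witt-type subalgebra, together with $d_j$ acting as a grading, makes $V$ a bounded weight module over a one-variable Witt algebra. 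By the known rank-one result (Billig--Futorny's annihilation lemma: a cuspidal module over $W_1^+$ with weight multiplicities at most $r$ is annihilated by $\Omega^{(m)}_{k,s}=\sum_i(-1)^i\binom{m}{i}e_{k-i}e_{s+i}$ for some $m$ depending only on $r$), we get the vanishing of the corresponding $\sigma$-operators when both $\alpha$ and $\beta$ lie on the line through $-e_{j'}$. To pass to general $\alpha,\beta$, we bracket with $l_{\gamma}$ for suitable $\gamma$. The identity $[l_\gamma,\sigma^{m,j}_{\alpha,\beta}]$ expands as a combination of $\sigma^{m',j}$ with shifted indices, with coefficients linear in the index. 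By induction on $|\alpha|+|\beta|$, and by enlarging $m$ by a bounded amount at each step, we transport the annihilation to all $\alpha,\beta\in\mathbb{Z}^2_{\geq-1}$.

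The main obstacle is this last step. The commutator of $l_\gamma$ with $\sigma^{m,j}_{\alpha,\beta}$ produces terms whose coefficients depend on $i$. These must be rewritten as differences of lower-order $\sigma$'s, using $\binom{m}{i}i=m\binom{m-1}{i-1}$, and this costs one order each time. I would therefore first prove a uniform statement: the set of $(\alpha,\beta)$ with $\sigma^{m,j}_{\alpha,\beta}V=0$ is closed under $\alpha\mapsto\alpha+\gamma$ at the cost of replacing $m$ by $m+2$. I would then check that finitely many steps reach every pair from the base line. This works because $\mathbb{Z}^2_{\geq-1}$ is generated from that line by adding $e_1,e_2$, so the total increase of $m$ stays bounded.
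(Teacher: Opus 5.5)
There is no in-paper proof to compare with: the lemma is quoted from Hu--Lu. Judged on its own, your argument has a genuine gap. Its base case is placed on the wrong line, and the transport step, which is where the content of the lemma lies, does not work as described.

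\textbf{Base case.} The span of the $l_{ke_j-e_{j'}}$ is abelian, not of Witt type. For $j=1$ one has $l_{(k,-1)}=-(k+1)t_1^k\partial_2$, and the bracket determinant has zero first column, so $[l_{(k,-1)},l_{(l,-1)}]=0$. No Witt-algebra annihilation lemma applies there. The Witt-type line is the one through the origin: $[l_{ke_1},l_{le_1}]=(l-k)l_{(k+l)e_1}$, so $\{l_{ke_j}\}_{k\ge-1}\cong W_1^+$, with $l_{-e_1}=\partial_1$ and $l_{(0,0)}=d_1-d_2$. Even on that line, three further points need fixing:
\begin{itemize}
\item $V$ is not a bounded module under the $d_j$-grading, since $\bigoplus_{\lambda_j=c}V_\lambda$ is in general infinite-dimensional. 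You must restrict to lines $\bigoplus_k V_{\mu+ke_j}$.
\item The Billig--Futorny lemma concerns cuspidal modules over $\mathrm{Der}\,\mathbb{C}[t^{\pm1}]$, so you need a $W_1^+$ analogue as input.
\item Even after these repairs, you only get vanishing of $\sigma^{m,j}_{\alpha,\beta}$ for $\alpha,\beta\in\mathbb{Z}e_j$.
\end{itemize}

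\textbf{Transport.} Take $\alpha,\beta$ on the Witt line, $j=1$, and $x+y=s$. Put $c(x)=(1+\gamma_2)(1+x)-(1+\gamma_1)$ and $q(x)=l_{\gamma+xe_1}l_{(s-x)e_1}$. Reordering $l_{xe_1}l_{\gamma+ye_1}$ gives
\[
[l_\gamma,\,l_{xe_1}l_{ye_1}]=c(x)\,q(x)+c(s-x)\,q(s-x)+p(x)\,l_{\gamma+se_1},
\]
with $p$ quadratic in $x$. Your hypothesis makes the left side polynomial in $x$ on each $V_\mu$. So this identity tells you only that $g(x)+g(s-x)$ is polynomial, where $g=cq$. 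It says nothing about the part of $g$ that is odd under $x\mapsto s-x$. Yet $\sigma^{M,1}_{\gamma+ke_1,le_1}$, with $s=k+l+M$, is exactly the $M$-th difference of $q$, which is what you need to kill.

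In general, $[l_\gamma,\sigma^{m,j}_{\alpha,\beta}]$ contains both new operators $\sigma^{m,j}_{\alpha+\gamma,\beta}$ and $\sigma^{m,j}_{\alpha,\beta+\gamma}$ together. It also contains $m$ times order-$(m-1)$ operators carrying the same new indices. Vanishing propagates only upward in $m$, via $\sigma^{m+1,j}_{\alpha,\beta}=\sigma^{m,j}_{\alpha+e_j,\beta}-\sigma^{m,j}_{\alpha,\beta+e_j}$. Hence no closure ``at cost $m\mapsto m+2$'' follows.

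The counting is also off. If the steps are $\gamma\in\{e_1,e_2\}$, as you suggest, the number of steps needed to reach $\alpha$ grows with $\alpha_{j'}$, so $m$ would not be uniform.

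What is missing is a direct argument, for arbitrary $\alpha,\beta$, that $x\mapsto l_{\alpha+xe_j}l_{\beta+(s-x)e_j}|_{V_\mu}$ is polynomial of bounded degree. One way is to treat $\{l_{\alpha+ke_j}\}_k$ as a density module over $\{l_{ke_j}\}$, acting between the bounded $W_1^+$-modules $\bigoplus_k V_{\mu+ke_j}$.
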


\begin{Lemma}\label{hat}
Let $M$ be a simple module in $\Omega^{\bar{S}_2}_{\mathbf{1}}$, then $\emph{Wh}_{\mathbf{1}}(\widehat{M})$ is finite-dimensional.
\end{Lemma}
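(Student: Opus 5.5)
Our strategy follows the standard cover argument of \cite{YVF,YG}, adapted to the Whittaker setting. We pass to weight modules with the weighting functor $\mathcal{W}$, use Lemma \ref{bound} to obtain polynomial-type relations, and then bound the Whittaker vectors of $\widehat{M}$ in terms of finitely many generators.

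\textbf{Step 1: $\mathcal{W}(M)$ is a bounded weight module.} By Corollary \ref{fin-r}, $M$ is a free $U(\mathfrak{h})$-module of finite rank $r=\dim \mathrm{Wh}_{\mathbf{1}}(M)$. Hence each $M/I_{\br}M$ has dimension exactly $r$, so $\mathcal{W}(M)$ is a weight $\bar{S}_2$-module, viewed as a $\bar{B}_2$-module with $z$ acting trivially, with all weight spaces of dimension at most $r$. Lemma \ref{bound} then gives $m$ with $\sigma^{m,j}_{\alpha,\beta}\mathcal{W}(M)=0$ for all $\alpha,\beta,j$.

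\textbf{Step 2: $\sigma^{m,j}_{\alpha,\beta}M=0$.} We lift this vanishing from $\mathcal{W}(M)$ back to $M$. For $v\in M$, the element $w=\sigma^{m,j}_{\alpha,\beta}v$ lies in $\bigcap_{\br}I_{\br}M$, where we track the weight shift: $w\in I_{\br+\alpha+\beta+me_j}M$ for every $\br\in\Z^2$. A free $U(\mathfrak{h})$-module of finite rank satisfies $\bigcap_{\br\in\Z^2} I_{\br}M=0$, since a nonzero polynomial cannot vanish on all of $\Z^2$. Hence $\sigma^{m,j}_{\alpha,\beta}$ annihilates $M$.

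\textbf{Step 3: reduce $\widehat{M}$ to finitely many generators.} Using Step 2 we show that $\widehat{M}$ is spanned by $l_\alpha\boxtimes v$ with $\alpha$ in a bounded region modulo $A_2$-action. The relations give a recursion: modulo $K(M)$, each $l_{\alpha+me_j}\boxtimes v$ is a combination of $l_{\alpha+ie_j}\boxtimes v'$ with $i<m$. Concretely, we apply $t^\gamma$ to elements $\sum_i(-1)^i\binom{m}{i}l_{\alpha+(m-i)e_j}\boxtimes l_{\beta+ie_j}v$. We check that these lie in $K(M)$: they lie in $\ker f$ by Step 2, and the same holds after applying $A_2$, because $t^\gamma$ shifts indices. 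It follows that $\widehat{M}$ is generated as an $\mathcal{D}_2$-type module by $\{l_\alpha\boxtimes v\mid \alpha\in\{-1,\dots,m\}^2,\ v\in \mathrm{Wh}_{\mathbf{1}}(M)\}$ together with $U(\mathfrak h)$. We use $M=U(\mathfrak h)\mathrm{Wh}_{\mathbf 1}(M)$ from Lemma \ref{basis} and the fact that $d_i$ acting on the second factor can be rewritten via the $\bar B_2$-action on the tensor product.

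\textbf{Step 4: conclude finite-dimensionality.} The key consequence is that $\widehat{M}$, restricted to $\mathcal{D}_2$ (via Theorem \ref{AS-iso}, after checking that $l_{(-1,-1)}$ acts by scalars), is a finitely generated Whittaker $\mathcal{D}_2$-module of type $\mathbf 1$. Finitely generated Whittaker $\mathcal{D}_2$-modules have finite length, since a holonomic-type argument shows each cyclic one is a quotient of finitely many copies of $A_2^{\mathbf 1}$. By Lemma \ref{Weyl-module}, $\widehat M\cong A_2^{\mathbf 1}\otimes W$ with $\dim W<\infty$, and $W=\mathrm{Wh}_{\mathbf 1}(\widehat M)$.

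The main obstacle is Step 3. We must verify carefully that the $\sigma$-relations descend to $K(M)$ with the $A_2$-closure condition. We must also control the $U(\mathfrak h)$-directions so that only finitely many $\mathcal D_2$-generators are needed, possibly by first showing that $\widehat M$ is a Whittaker module of type $\mathbf 1$ for $\Delta_2$ acting diagonally.
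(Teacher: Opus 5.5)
Your Steps 1--3 are essentially the paper's argument. You pass to $\mathcal{W}(M)$, apply Lemma \ref{bound}, and lift $\sigma^{m,j}_{\alpha,\beta}M=0$ using $\bigcap_{\mathbf{r}}I_{\mathbf{r}}M=0$. You then use the elements $\sum_i(-1)^i\binom{m}{i}l_{\alpha+(m-i)e_j}\otimes l_{\beta+ie_j}v\in K(M)$ to push every $l_\gamma\boxtimes w$ into $\sum_{\gamma\ \mathrm{bounded}}l_\gamma\boxtimes M$. This needs every $w\in M$ to be a sum of elements $l_\beta v$; that holds because $M=S_2M$, since $\partial_1=l_{(-1,0)}$ acts invertibly on $M$. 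Combine this with $M=U(\mathfrak{h})\mathrm{Wh}_{\mathbf{1}}(M)$ and with $l_\gamma\boxtimes g(d_1,d_2)v=g(d_1-\gamma_1,d_2-\gamma_2)(l_\gamma\boxtimes v)$. What you get is exactly the paper's intermediate conclusion: $\widehat{M}$ is a finitely generated $U(\mathfrak{h})$-module. So Step 3 is not really the obstacle.

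The gap is in Step 4. There you assert that $\widehat{M}$ is a finitely generated $\mathcal{D}_2$-module, but nothing before it gives this. Your generators generate over $U(\mathfrak{h})$, and $\mathfrak{h}$ does not sit in the $\mathcal{D}_2$-factor: under Theorem \ref{AS-iso}, $\phi(d_i)=d_i\otimes 1+1\otimes d_i$. The component $1\otimes d_i\in U(\bar{S}_2^{\geq 0})$ acts on $\mathrm{Wh}_{\mathbf{1}}(\widehat{M})$ with no a priori finiteness. Moreover, every Whittaker $\mathcal{D}_2$-module of type $\mathbf{1}$ is $A_2^{\mathbf{1}}\otimes\mathrm{Wh}_{\mathbf{1}}$, by the argument of Lemma \ref{basis} with $t_i$ in place of $d_i$. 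So finite generation over $\mathcal{D}_2$ is equivalent to the lemma itself. The detour through finite length and Lemma \ref{Weyl-module} therefore assumes what must be proved.

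The repair is to drop $\mathcal{D}_2$ altogether, as the paper does.
\begin{itemize}
\item On $B_2\otimes M$, $\partial_i-1$ acts as $\mathrm{ad}\,\partial_i\otimes 1+1\otimes(\partial_i-1)$, a sum of commuting locally nilpotent operators. This passes to the quotient.
\item The element $l_{(-1,-1)}$ acts by $0$ on $\widehat{M}$: it is central in $B_2$ and kills $M$. Hence $\widehat{M}$ is a Whittaker $\bar{S}_2$-module of type $\phi_{\mathbf{1}}$.
\item Lemma \ref{basis} then gives $\widehat{M}\cong U(\mathfrak{h})\otimes\mathrm{Wh}_{\mathbf{1}}(\widehat{M})$.
\item A finitely generated free $U(\mathfrak{h})$-module has finite rank, so $\dim\mathrm{Wh}_{\mathbf{1}}(\widehat{M})<\infty$.
\end{itemize}
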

\begin{proof} From Lemma \ref{basis} and Corollary \ref{fin-r}, $M$ is a free $U(\mathfrak{h})$-module with finite rank. Then $\mathcal{W}(M)$ is a bounded weight $\bar{S}_2$-module, and hence also a bounded weight $\bar{B}_2$-module.
Moreover, we can see that the dimension of each weight space of $\mathcal{W}(M)$ does not exceed the rank of $M$ as a free $U(\mathfrak{h})$-module. By Lemma \ref{bound}, there exists an $m\in \mathbb{Z}_{+}$ satisfying
$$\sigma^{m,j}_{\alpha,\beta} \mathcal{W}(M)=0,\forall \, \alpha, \beta \in \mathbb{Z}^{2}_{\geq-1},j=1,2.$$

Since $M$ is  $U(\mh)$-free, $\bigcap_{\br\in \mathbb{Z}^{2}}I_{\br}M=0$. 
Then by the definition of $\mathcal{W}(M)$, it follows that
$$\sigma^{m,j}_{\alpha,\beta}M \subseteq \bigcap_{\br\in \mathbb{Z}^{2}}I_{\br}M=0.$$
Since $M$ is simple and $l_{-1,-1}$ acts trivially on $M$, we have $M=\bar{S}_{2}M=\bar{B}_2M$. We proceed by induction on $|\alpha|$ to show that
$$l_{\alpha} \boxtimes l_{\beta}v \in \sum_{|\br|\leq 2m}l_{\br} \boxtimes M$$
for all $v\in M$, $\alpha,\beta \in \mathbb{Z}^{2}_{\geq-1}$. It is clear for $|\alpha|\leq 2m$. Now we assume that $|\alpha|> 2m$. Without loss of generality, we may assume that $\alpha_1>m$. For any $\alpha\in \mathbb{Z}^{2}_{\geq-1}$, by Lemma \ref{bound}, we have
\begin{align*}
f(\sum^{m}_{i=0}(-1)^{i} \binom{m}{i} l_{\alpha-ie_{1}} \otimes l_{\beta+ie_{1}}v)
=&\sum^{m}_{i=0}(-1)^{i} \binom{m}{i} l_{\alpha-me_{1}+(m-i)e_{1}} l_{\beta+ie_{1}}v\\
=&\sigma^{m,1}_{\alpha-me_1,\beta}v\\
=&0,
\end{align*}
thus
$$\sum^{m}_{i=0}(-1)^{i} \binom{m}{i} l_{\alpha-ie_{1}} \otimes l_{\beta+ie_{1}}v \in K(M).$$
Therefore
$$l_{\alpha} \boxtimes l_{\beta}v=-\sum^{m}_{i=1}(-1)^{i} \binom{m}{i} l_{\alpha-ie_{1}} \boxtimes l_{\beta+ie_{1}}v \in \sum_{|\br|\leq 2m}l_{\br} \boxtimes M,$$
according to the induction hypothesis. Thus $\widehat{M}$ is a finitely generated $U(\mathfrak{h})$-module. By Lemma \ref{basis}, $\widehat{M}$ is a $U(\mathfrak{h})$ free module, and hence $\text{Wh}_{\mathbf{1}}(\widehat{M})$ is finite-dimensional.
\end{proof}

\begin{Theorem}\label{simple-S}
If $M$ is a simple module in $\Omega^{\bar{S}_2}_{\mathbf{1}}$, then $M$ is isomorphic to some simple quotient of $T(A^{\mathbf{1}}_{2},V)$ for some finite-dimensional simple $\mathfrak{gl}_2$-module $V$.
\end{Theorem}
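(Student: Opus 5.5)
The plan is to use the $(A_2,\bar{B}_2)$-cover $\widehat{M}$ and reduce to Corollary \ref{AS-mod} together with Theorem \ref{cat-equ}.

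\textbf{Step 1: $\widehat{M}$ lies in $\Omega^{\widetilde{S}_2}_{\mathbf{1}}$ and surjects onto $M$.} Since $M$ is simple and nontrivial, $\bar{B}_2M=\bar{S}_2M=M$, so $\hat f:\widehat{M}\to M$, $l_\alpha\boxtimes v\mapsto l_\alpha v$, is a surjective $\bar{B}_2$-homomorphism. Next we check that $l_{(-1,-1)}=z$ acts trivially on $\widehat{M}$, so that $\widehat{M}$ is an $(A_2,\bar{S}_2)$-module. Indeed, $z\cdot(l_\alpha\otimes v)=[z,l_\alpha]\otimes v+l_\alpha\otimes zv=\pm(\cdots)$. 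More directly, since $z$ is central and $zM=0$, for any $w\in B_2\otimes M$ we have $f(t^\beta zw)=f(z t^\beta w)=zf(t^\beta w)=0$. (We use that $A_2$ and $z$ commute up to brackets landing in $A_2$; if they do not commute exactly, we handle this with the bracket formula.) Hence $zw\in K(M)$.

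Since $\Delta_2$ acts locally finitely on $B_2$ (it lowers degree) and on $M$, it acts locally finitely on $\widehat{M}$. The generalized eigenvalues of $\partial_i$ on $B_2\otimes M$ are those on $M$ (because $\operatorname{ad}\partial_i$ is nilpotent on $B_2$), so $\widehat{M}$ is of type $\phi_{\mathbf{1}}$. By Lemma \ref{hat}, $\dim\mathrm{Wh}_{\mathbf{1}}(\widehat{M})<\infty$. Therefore $\widehat{M}\in\Omega^{\widetilde{S}_2}_{\mathbf{1}}$.

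\textbf{Step 2: reduce to a simple $\gl_2$-module.} By Theorem \ref{cat-equ}, $\widehat{M}\cong T(A_2^{\mathbf{1}},W)$ with $W=\mathrm{Wh}_{\mathbf{1}}(\widehat{M})$ a finite-dimensional $\bar{S}_2^{\geq0}$-module. Take a composition series $0=W_0\subset W_1\subset\cdots\subset W_k=W$. Exactness of $T(A_2^{\mathbf{1}},-)$ gives a filtration of $\widehat{M}$ by $\bar{S}_2$-submodules $T(A_2^{\mathbf{1}},W_i)$ with quotients $T(A_2^{\mathbf{1}},W_i/W_{i-1})$. Each $W_i/W_{i-1}$ is a simple $\gl_2$-module by Lemma \ref{gl2}.

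Now choose the minimal $i$ such that $\hat f(T(A_2^{\mathbf{1}},W_i))\neq0$. By simplicity of $M$, $\hat f$ maps this submodule onto $M$ and kills $T(A_2^{\mathbf{1}},W_{i-1})$. It therefore induces a surjective $\bar{S}_2$-homomorphism $T(A_2^{\mathbf{1}},W_i/W_{i-1})\to M$. Hence $M$ is a simple quotient of $T(A_2^{\mathbf{1}},V)$ with $V=W_i/W_{i-1}$.

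\textbf{Main obstacle.} The delicate point is Step 1: verifying that $\widehat{M}$ is honestly an $(A_2,\bar{S}_2)$-module, i.e.\ that $z$ acts as zero modulo $K(M)$. I would first try to show $zw\in K(M)$ by computing $f(t^\beta z w)$ with the bracket $[l_{(-1,-1)},l_\alpha]$ and the $A_2$-action, and using $zM=0$. If that fails, the fallback is to regard $T(A_2^{\mathbf{1}},-)$ as an $(A_2,\bar{B}_2)$-module construction and run the same composition-series argument there, noting that $z$ acts by a scalar on each simple quotient and that this scalar is forced to be zero by surjectivity onto $M$.
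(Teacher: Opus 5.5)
Your proposal is correct and follows essentially the paper's route: pass to the cover $\widehat{M}$, use Lemma \ref{hat} for finiteness of $\mathrm{Wh}_{\mathbf{1}}(\widehat{M})$, then Theorem \ref{cat-equ} and Corollary \ref{AS-mod} to extract a simple $(A_2,\bar{S}_2)$-subquotient $T(A_2^{\mathbf{1}},V)$ mapping onto $M$. The paper does the extraction by choosing a cover with minimal $\dim\mathrm{Wh}_{\mathbf{1}}$, whereas you take the minimal index in a composition series of $W$, which is the same idea made explicit; your Step 1 hedge is unnecessary, since $[z,l_\alpha]=0$ (the first row of the determinant vanishes) and so $z$ already acts by $0$ on $B_2\otimes M$, a point the paper leaves implicit.
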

\begin{proof} By Lemma \ref{hat}, there exists an $(A_2, \bar{S}_2)$-module $M_1$ such that
$\dim{\text{Wh}_{\ba}(M_1)}$ is finite dimensional and an $\bar{S}_2$-module epimorphism $g:M_1 \rightarrow M$.
We can choose $M_1$ such that $\dim(\text{Wh}_{\mathbf{1}}(M_1))$ is minimal. We can  prove that $M_1$ is  a simple $(A_2, \bar{S}_2)$-module.
Indeed, if  $M_1$  admits a nonzero proper maximal $(A_2, \bar{S}_2)$-submodule $M_2$, then, by Corollary \ref{fin-r}, both $M_2$ and $M_1/M_2$ are free $U(\mathfrak{h}_n)$-modules of rank less than $\dim \text{Wh}_{\mathbf{1}}(M_1)$.
Since $M$ is simple, then $g(M_2)= M$ or $g(M_2)=0$. Therefore $M_2$ or $M_1/M_2$ admits a simple $\bar{S}_2$-quotient isomorphic to $M$, which contradicts the minimality of $M_1$. From Corollary \ref{AS-mod}, $M_1 \cong T(A^{\mathbf{1}}_2, V)$ for some finite-dimensional simple $\mathfrak{gl}_2$-module $V$.  Therefore $M$ is isomorphic to a simple $\bar{S}_2$-quotient of $T(A^{\mathbf{1}}_2, V)$.
\end{proof}

 Let $V(\mathbf{\lambda})$ be the simple highest weight $\gl_2$-module,
which is finite-dimensional if and only if $\lambda_1-\lambda_2\in \Z_{\geq 0}$.  The $\bar{S}_2$-module $T(A^{\mathbf{1}}_{2},V(\lambda))$ is  reducible
if and only if $\lambda_1=1,\lambda_2=0$, i.e. $V(\lambda)\cong \C^2$. The submodule $U(\bar{S}_2)(1\otimes (e_1+e_2))$ and the quotient module $T(A^{\mathbf{1}}_{2},\C^2)/U(\bar{S}_2)(1\otimes (e_1+e_2))$ of $T(A^{\mathbf{1}}_{2},\C^2)$  are both simple, since their Whittaker vector spaces are one dimensional.
All simple sub-quotients of arbitrary  $T(P,V)$ as an $S_2$-module were given in \cite{HL2}.

\section{An analogue of Skryabin's equivalence}

In this section, we establish an analogue of Skryabin's equivalence for the block $\Omega^{\bar{S}_2}_{\mathbf{1}}$.
More precisely, the category $\Omega_{\mathbf{1}}^{\bS}$ is equivalent to the category of finite-dimensional modules over $H_{\mathbf{1}}$, the opposite algebra of the endomorphism algebra of a universal Whittaker module. Furthermore, we explicitly determine a set of generators for  $H_{\mathbf{1}}$.

\subsection{An equivalence between $\Omega^{\bar{S}_2}_{\mathbf{1}}$ and $H_{\m1}$-fmod}

Let $\mathbb{C}_{\m1}=\mathbb{C}v_{\m1}$ be the 1-dimensional $U(\Delta_2)$-module such that $\partial_{i}v_{\m1}=v_{\m1}$ for $i=1,2$.

Define
$$Q_{\m1}=U(\bar{S}_2)\otimes_{U(\Delta_2)} \mathbb{C}_{\m1},$$
as the induced $\bar{S}_2$-module, and let $H_{\m1}=\text{End}_{\bar{S}_2}(Q_{\m1})^{\text{op}}$ be the associated associative algebra over $\mathbb{C}$. Then $Q_{\m1}$ is both a left $U(\bar{S}_2)$-module and a right $H_{\m1}$-module.

Let $H_{\m1}$-fmod be the category of finite-dimensional $H_{\m1}$-modules.
The following Theorem is an analogue of Skryabin's equivalence \cite{Pr} on finite $W$ algebras.

\begin{Theorem} \label{sk}
We have the following assertions.
\begin{enumerate}
\item As a right $H_{\m1}$-module, $Q_{\m1}$ is free. More precisely, the subset
$$\{h^{\bm}\otimes v_{\m1} \mid \bm\in \mathbb{Z}^{2}_{+}\}$$
is a basis of $Q_{\m1}$ as an $H_{\m1}$-module.

\item The functors $M \rightarrow \emph{Wh}_{\m1}(M)$ and $V \rightarrow Q_{\m1}\otimes_{H_{\m1}}V$ are inverse equivalence between $\Omega^{\bar{S}_2}_{\m1}$ and $H_{\m1}$-\emph{fmod}.
\end{enumerate}
\end{Theorem}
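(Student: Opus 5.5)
The plan follows the standard Skryabin argument, with Lemma \ref{basis} doing most of the work. First I identify $H_{\m1}$ with the space of Whittaker vectors of $Q_{\m1}$. By Frobenius reciprocity, $\text{Hom}_{\bar{S}_2}(Q_{\m1},M)\cong \text{Wh}_{\m1}(M)$ for any $\bar{S}_2$-module $M$, via $\varphi\mapsto \varphi(1\otimes v_{\m1})$. In particular $H_{\m1}\cong \text{Wh}_{\m1}(Q_{\m1})$ as vector spaces. The right action of $H_{\m1}$ on $\text{Wh}_{\m1}(M)$ is given by $w\cdot \varphi=\psi_w(\varphi(1\otimes v_{\m1}))$, where $\psi_w:Q_{\m1}\to M$ is the map determined by $w$. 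This makes $\text{Wh}_{\m1}(M)$ a left $H_{\m1}$-module, and the construction is functorial in $M$.

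For part (1), note that $Q_{\m1}$ is a Whittaker module of type $\phi_{\m1}$. Indeed, $\Delta_2$ acts locally finitely on it, since $\text{ad}\,\Delta_2$ acts locally nilpotently on $U(\bar{S}_2)$ and lowers degree. Lemma \ref{basis} then gives $Q_{\m1}=U(\mh)\otimes \text{Wh}_{\m1}(Q_{\m1})$, with the $h^{\bm}$ acting freely. Each $w\in \text{Wh}_{\m1}(Q_{\m1})$ has the form $w=\varphi_w(1\otimes v_{\m1})$ for the corresponding endomorphism $\varphi_w$. Hence $h^{\bm}w=h^{\bm}\varphi_w(1\otimes v_{\m1})=\varphi_w(h^{\bm}\otimes v_{\m1})=(h^{\bm}\otimes v_{\m1})\cdot \varphi_w$. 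So the decomposition of Lemma \ref{basis} says exactly that $\{h^{\bm}\otimes v_{\m1}\}$ is a right $H_{\m1}$-basis of $Q_{\m1}$. Applying Lemma \ref{basis} to $Q_{\m1}$ requires only that it be a Whittaker module of type $\m1$; the finite-dimensionality condition is not needed there.

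For part (2), take $V\in H_{\m1}$-fmod. By (1), $Q_{\m1}\otimes_{H_{\m1}}V\cong \bigoplus_{\bm}h^{\bm}\otimes V= U(\mh)\otimes V$ as vector spaces. The module is Whittaker of type $\m1$, since it is a quotient of a direct sum of copies of $Q_{\m1}$. Its Whittaker vectors are computed via the commutation relation $[\p_i,h^{\bm}]$, which lowers the $d_i$-degree; the injectivity computation in the proof of Lemma \ref{basis} then shows $\text{Wh}_{\m1}(Q_{\m1}\otimes_{H_{\m1}}V)=1\otimes V\cong V$ as $H_{\m1}$-modules. It follows that this module lies in $\Omega^{\bar{S}_2}_{\m1}$ and that $\text{Wh}_{\m1}\circ(Q_{\m1}\otimes_{H_{\m1}}-)\cong \text{id}$. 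Conversely, for $M\in\Omega^{\bar{S}_2}_{\m1}$ there is a natural evaluation map
\[
Q_{\m1}\otimes_{H_{\m1}}\text{Wh}_{\m1}(M)\to M,\qquad (u\otimes v_{\m1})\otimes w\mapsto uw.
\]
Under the vector space identifications $U(\mh)\otimes \text{Wh}_{\m1}(M)$ on both sides, this map becomes the isomorphism of Lemma \ref{basis}. It is therefore bijective, and naturality is routine.

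The main obstacle is making the identification of the $H_{\m1}$-action precise. One must check that the induced module $Q_{\m1}\otimes_{H_{\m1}}V$, built with the relations from all of $H_{\m1}$, really has underlying space $U(\mh)\otimes V$ with Whittaker space exactly $V$. I would handle this by first verifying the case $V=H_{\m1}$, which is part (1). Next I would note that $\Delta_2$-local finiteness and the $U(\mh)$-freeness statement of Lemma \ref{basis} are compatible with the right $H_{\m1}$-action, because $U(\mh)$ acts from the left and commutes with $H_{\m1}$. The general case then follows from exactness of $-\otimes_{H_{\m1}}V$ on the free right module $Q_{\m1}$.
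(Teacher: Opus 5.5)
Your proposal is correct and follows essentially the same route as the paper. You identify $H_{\mathbf{1}}$ with $\mathrm{Wh}_{\mathbf{1}}(Q_{\mathbf{1}})$ by Frobenius reciprocity, then apply Lemma \ref{basis} to $Q_{\mathbf{1}}$ for part (1), and to $M$ and $Q_{\mathbf{1}}\otimes_{H_{\mathbf{1}}}V$ for the two composites in part (2). You also supply details the paper leaves implicit: that $Q_{\mathbf{1}}$ is Whittaker of type $\phi_{\mathbf{1}}$, that the evaluation map is well defined over $H_{\mathbf{1}}$, and that $1\otimes V\cong V$ as $H_{\mathbf{1}}$-modules. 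The only slip is terminological: the precomposition action you first call a ``right action'' is the left $H_{\mathbf{1}}$-module structure, as you then correctly say.
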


\begin{proof} (1) By the universal property of $Q_{\m1}$, we can obtain
$$\text{End}_{\bar{S}_2}(Q_{\m1}) \cong \text{Hom}_{\Delta_2}(\mathbb{C}_{\m1}, Q_{\m1}) \cong \text{Wh}_{\m1}(Q_\m1).$$
The second isomorphism follows from the fact that  for any $\theta \in \text{Hom}_{\Delta_2}(\mathbb{C}_{\m1}, Q_{\m1})$,  the image $\theta(v_{\m1})$ is a Whittaker vector.
Consequently, $\text{Wh}_{\m1}(Q_{\m1})$ is a free $H_{\m1}$-module of rank $1$. Since $Q_{\m1}=U(\mathfrak{h})\otimes\text{Wh}_{\m1}(Q_{\m1})$ as vector space, $Q_{\m1}$ is a free $U(\mathfrak{h})$-module,  and thus (1) follows.

(2) Combining (1) with Lemma \ref{basis}, for any $ M\in \Omega^{\bar{S}_2}_{\m1}$, we have
$$Q_{\m1} \otimes_{H_{\m1}}\text{Wh}_{\m1}(M) \cong M.$$

Conversely, for each $V \in H_{\m1}$-fmod, we have
$$\text{Wh}_{\m1}(Q_{\m1}\otimes_{H_{\m1}}V)
=\text{Wh}_{\m1}(U(\mathfrak{h})\otimes V) \cong V.$$
Therefore the categories $\Omega^{\bar{S}_2}_{\m1}$ and $H_{\m1}$-fmod are equivalent.
\end{proof}

 Combing with Lemma \ref{omega-cat} and Theorem \ref{sk}, we have the following result.

\begin{Corollary}
The categories $\Omega^{\bar{S}_2}_{\mathbf{a}}$ and $H_{\m1}$-\emph{fmod} are equivalent for any $\mathbf{a}\neq \mathbf{0}$.
\end{Corollary}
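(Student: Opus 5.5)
The statement is a composition of two equivalences already in hand, so the plan is simply to compose them. Fix $\ba\neq\mathbf{0}$. By Lemma \ref{omega-cat} there is an automorphism $\sigma$ of $\bS$ with $\sigma(\Delta_2)=\Delta_2$ (namely $\mathrm{exp}^{\mathrm{ad}(-b_1d_1-b_2d_2)}$ when $a_1a_2\neq 0$, or a composite of $\mathrm{exp}^{\mathrm{ad}(-t_2\p_1)}$ with such a scaling and possibly the swap of $t_1,t_2$ when $a_1a_2=0$). Twisting by $\sigma$ then sends $\Omega^{\bS}_{\ba}$ to $\Omega^{\bS}_{\m1}$. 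I would write $\mathcal{T}_\sigma:M\mapsto M^\sigma$ for this twisting functor.

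The first step is to confirm that $\mathcal{T}_\sigma$ really is an equivalence. It is the identity on underlying spaces and on morphisms, so it is fully faithful, and twisting by $\sigma^{-1}$ gives its inverse. One must check that it preserves the defining conditions of the categories. Since $\sigma$ restricts to a linear automorphism of $\Delta_2$, local finiteness of $\Delta_2$ is preserved. Moreover, $\sigma^{*}\phi_{\m1}=\phi_{\ba}$ on $\Delta_2$: in the nonsingular case because $\sigma(\p_i)=a_i\p_i$, and in the singular case by the explicit formula $\mathrm{exp}^{\mathrm{ad}(-t_2\p_1)}(\p_2)=\p_2+\p_1$. Hence generalized eigenvectors for $\phi_{\ba}$ become generalized eigenvectors for $\phi_{\m1}$, and $\mathrm{Wh}_{\ba}(M)=\mathrm{Wh}_{\m1}(M^\sigma)$ as subspaces. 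This equality shows that finite-dimensionality of Whittaker vector spaces is preserved in both directions.

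The second step is to apply Theorem \ref{sk}(2), which gives inverse equivalences $\mathrm{Wh}_{\m1}(-)$ and $Q_{\m1}\otimes_{H_{\m1}}-$ between $\Omega^{\bS}_{\m1}$ and $H_{\m1}$-fmod. The composite $M\mapsto \mathrm{Wh}_{\m1}(M^\sigma)=\mathrm{Wh}_{\ba}(M)$, with its $H_{\m1}$-action transported via $\sigma$, is therefore an equivalence $\Omega^{\bS}_{\ba}\to H_{\m1}$-fmod. Its quasi-inverse is $V\mapsto (Q_{\m1}\otimes_{H_{\m1}}V)^{\sigma^{-1}}$.

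The only real obstacle is the degenerate case $a_1a_2=0$. There one must check that the chosen automorphism maps $\Delta_2$ into itself and pulls $\phi_{\m1}$ back to $\phi_{\ba}$, since otherwise the Whittaker-type condition could fail to be preserved. I would verify this directly from $[t_2\p_1,\p_1]=0$ and $[t_2\p_1,\p_2]=-\p_1$, which show that the exponential of the nilpotent derivation $\mathrm{ad}(-t_2\p_1)$ acts on $\Delta_2$ by a unipotent matrix. After that, a diagonal scaling normalizes the character to $\m1$.
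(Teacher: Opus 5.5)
Your route is the paper's: compose the twisting equivalence of Lemma~\ref{omega-cat} with Theorem~\ref{sk}(2). Your extra checks are fine: full faithfulness, the inverse given by twisting with $\sigma^{-1}$, and preservation of local finiteness and of $\dim\mathrm{Wh}<\infty$.

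There is one correction, in exactly the step you single out as the crux. For $M\mapsto M^\sigma$ (with $u\cdot v=\sigma(u)v$) to carry $\Omega^{\bar S_2}_{\mathbf a}$ into $\Omega^{\bar S_2}_{\mathbf 1}$ and to give $\mathrm{Wh}_{\mathbf 1}(M^\sigma)=\mathrm{Wh}_{\mathbf a}(M)$, you need $\phi_{\mathbf a}\circ\sigma=\phi_{\mathbf 1}$ on $\Delta_2$, not $\phi_{\mathbf 1}\circ\sigma=\phi_{\mathbf a}$.

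In the nonsingular case, take $\sigma(\partial_i)=a_i\partial_i$ as you did. Then $\partial_i$ acts on $M^\sigma$ as $a_i\partial_i$, whose generalized eigenvalue is $a_i^2$. So $M^\sigma\in\Omega^{\bar S_2}_{(a_1^2,a_2^2)}$ and $\mathrm{Wh}_{\mathbf 1}(M^\sigma)=\mathrm{Wh}_{(a_1^{-1},a_2^{-1})}(M)$. You must instead take $b_i=-\ln a_i$, i.e.\ twist by $\sigma^{-1}$. The paper's proof of Lemma~\ref{omega-cat} has the same slip, since it sets $b_i=\ln a_i$.

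In the singular case, your own formula $\tau(\partial_2)=\partial_1+\partial_2$ already satisfies the correct condition: $\phi_{(1,0)}(\partial_1+\partial_2)=1=\phi_{\mathbf 1}(\partial_2)$. Your stated condition would instead demand $\phi_{\mathbf 1}(\partial_1+\partial_2)=\phi_{(1,0)}(\partial_2)$, i.e.\ $2=0$.

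With the orientation fixed, the rest of your argument goes through unchanged.
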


\subsection{ The associative algebra $H_{\m1}$}
Since each $\text{ad}\p_i$ is locally nilpotent on $U(\bS)$, the following set
   $$\p:=\{\p_1^{i_1} \p_2^{i_2}\mid i_1, i_2\in\Z_{+}\}$$ is an Ore subset of $U(\bS)$, see the proof of Lemma 4.2 in \cite{M}.
Let  $U_{(-1)}$  be the localization of $U(\bS)$ with respect to $\p$.
Define $$H=\{u\in U_{(-1)}\mid [u,\p_i]=[u, d_i]=0,\ \forall\ i=1,2\}, $$
which is a subalgebra of $U_{(-1)} $.

For any $\a\in \Z_{\geq-1}^2$ with $|\a|\geq0$ and $\a\neq\mathbf{0}$,  define the following element in $U_{(-1)}$:
\begin{align*} \label{x-operator}
 Y_{\a}=&(L_\a)\p^{\a}
+\sum_{0\leq|\be|<|\a|\atop \be\neq \mathbf{0}}(-1)^{|\a-\be|}
\binom{\a+\mathbf{1}}{\be+\mathbf{1}}(L_\be)
\prod_{m_1=0}^{\a_1-\be_1-1}(d_1-m_1)
\prod_{m_2=0}^{\a_2-\be_2-1}(d_2-m_2)\p^{\be}\\
&+(-1)^{|\a|}\a_1(\a_2+1)\prod_{i=-1}^{\a_1-1}(d_1-i)
\prod_{j=0}^{\a_2-1}(d_2-j)\\
&+(-1)^{|\a|-1}(\a_1+1)\a_2\prod_{i=0}^{\a_1-1}(d_1-i)
\prod_{j=-1}^{\a_2-1}(d_2-j),\end{align*}
where we define $\prod_{k=0}^{-1}(d_i-k)
=\prod_{k=-1}^{-2}(d_i-k)=1$.
For example,
 \begin{equation}\label{x-w-def}
 \aligned  Y_{(1,-1)}= &\  (-2t_1\p_2)\p_1\p_2^{-1}+2t_1\p_1 ,\\
 Y_{(-1,1)}= &(2t_2\p_1)\p_2\p_1^{-1}-2t_2\p_2 ,\\
 Y_{(1,0)}=&(t_1^2\p_1-2t_1t_2\p_2)\p_1+2t_1\p_2(t_2\p_2)\p_1\p_2^{-1}-(t_1\p_1)^2-t_1\p_1, \\
 Y_{(0,1)}=&(2t_1t_2\p_1-t_2^2\p_2)\p_2-2t_2\p_1(t_1\p_1)\p_2\p_1^{-1}+(t_2\p_2)^2+t_2\p_2. \endaligned\end{equation}


\begin{Lemma}\label{H-generator}For any $\a\in \Z_{\geq-1}^2$ with $|\a|\geq0$ and $\a\neq\mathbf{0}$,  we have that  $Y_{\a}\in H$.
\end{Lemma}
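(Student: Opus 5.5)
The plan is to verify the two families of commutation relations separately: first $[d_i,Y_\a]=0$, which is a weight count, and then $[\p_j,Y_\a]=0$, which is a telescoping computation. Throughout I work in $U_{(-1)}$, using that $\p^{\be}=\p_1^{\be_1}\p_2^{\be_2}$ makes sense there for $\be\in\Z_{\geq-1}^2$ because $\p$ is an Ore set.

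\emph{Commutation with $d_i$.} From the explicit form of $L_\be$ one has $[d_i,L_\be]=\be_iL_\be$, and also $[d_i,\p^{\be}]=-\be_i\p^{\be}$; this holds for negative exponents as well, since $[d_i,\p_i^{-1}]=\p_i^{-1}$. Polynomials in $d_1,d_2$ commute with $d_i$. Hence every summand $L_\be\,(\text{polynomial in }d)\,\p^{\be}$ of $Y_\a$, as well as the last two pure-$d$ terms, has $\mh$-weight $0$. This gives $[d_i,Y_\a]=0$.

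\emph{Commutation with $\p_j$.} I will need three facts.
\begin{itemize}
\item[(i)] $[\p_1,L_\be]=(1+\be_1)L_{\be-e_1}$ and $[\p_2,L_\be]=(1+\be_2)L_{\be-e_2}$. This is a direct check from $L_\be=(1+\be_2)t^{\be+e_1}\p_1-(1+\be_1)t^{\be+e_2}\p_2$.
\item[(ii)] $\p_j f(d_1,d_2)=f(d_1,d_2)|_{d_j\mapsto d_j+1}\,\p_j$.
\item[(iii)] The low-degree identifications $L_{(-1,0)}=\p_1$, $L_{(0,-1)}=-\p_2$ and $L_{\mathbf 0}=d_1-d_2$. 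These are exactly why the $\be=\mathbf 0$ term is excluded from the sum and replaced by the two explicit polynomials in $d$.
\end{itemize}
Consider $\p_1Y_\a-Y_\a\p_1$. By (i) and (ii), conjugating the term indexed by $\be$ produces two contributions. One contains $L_\be$ with the product $\prod(d_1-m_1)$ shifted by one; using $\prod_{m=0}^{k-1}(d+1-m)-\prod_{m=0}^{k-1}(d-m)=k\prod_{m=0}^{k-2}(d-m)$, the difference is a term of the same shape with $\a_1-\be_1$ lowered by one. The other contribution is $(1+\be_1)L_{\be-e_1}\cdots\p^{\be}$, which after rewriting $\p^{\be}=\p^{\be-e_1}\p_1$ has the shape of the term indexed by $\be-e_1$. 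I will show these cancel pairwise using the Pascal-type identity
$$(\be_1+1)\binom{\a_1+1}{\be_1+1}=(\a_1-\be_1+1)\binom{\a_1+1}{\be_1},$$
combined with the sign $(-1)^{|\a-\be|}$. The surviving boundary terms are of three kinds: those with $\be-e_1=\mathbf 0$, those with $\be-e_1\in\{(-1,0),(-1,1),\dots\}$ where (iii) converts $L$'s into $\p$'s and $d$'s, and those from the leading term $L_\a\p^\a$. These must cancel against the commutator of $\p_1$ with the last two $d$-polynomial terms, which by (ii) is again a difference of shifted products. Checking this cancellation is the main obstacle. I will first test it on the examples in \eqref{x-w-def} to fix signs, for instance verifying directly that $[\p_1,Y_{(1,0)}]=0$. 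After that I will do the general bookkeeping, separating the $\be$ with $\be_1=-1$ or $\be_2=-1$ because the product ranges then start at $-1$.

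\emph{Reduction for $\p_2$.} Finally, I expect $[\p_2,Y_\a]=0$ to follow by symmetry. The automorphism of $\bS$ induced by swapping $t_1\leftrightarrow t_2$ sends $L_\a\mapsto -L_{\a'}$, where $\a'=(\a_2,\a_1)$, and interchanges $d_1\leftrightarrow d_2$ and $\p_1\leftrightarrow\p_2$. It extends to $U_{(-1)}$ and, inspecting the formula, maps $Y_\a$ to $\pm Y_{\a'}$; the signs of the last two terms are exactly arranged for this. So the $\p_1$-computation for all $\a$ gives the $\p_2$ case. If the symmetry bookkeeping proves messy, I will instead repeat the same telescoping with $e_2$ in place of $e_1$.
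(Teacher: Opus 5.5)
Your plan is correct and is essentially the paper's proof: $[d_i,Y_\a]=0$ by weight counting, and $[\p_1,Y_\a]=0$ by pairing the $[\p_1,L_\be]$ contribution with the shifted-product contribution of index $\be-e_1$ via the same binomial identity (the paper writes it as $(\a_1-\be_1)\binom{\a_1+1}{\be_1+1}=(\be_1+2)\binom{\a_1+1}{\be_1+2}$, i.e.\ yours with $\be_1\mapsto\be_1+1$); the only leftovers are the terms from $\be=(1,0)$ and $\be=(1,-1)$, where $L_{\mathbf 0}=d_1-d_2$ and $L_{(0,-1)}=-\p_2$ appear, together with $[\p_1,\cdot\,]$ of the two pure-$d$ terms, and these cancel by a direct polynomial identity --- precisely the check you flag as the main obstacle. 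Your symmetry reduction for $\p_2$ is valid (one checks $\tau(Y_\a)=-Y_{\a'}$ for the swap $\tau$ with $\a'=(\a_2,\a_1)$, and $\tau$ preserves the Ore set, so it extends to $U_{(-1)}$), and it is a slightly cleaner substitute for the paper's ``similarly''.
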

\begin{proof}
It is clear that $[Y_\a,d_i]=0$, for $i=1,2$.
We write
$$Y_\a=\sum_{0\leq|\be|\leq|\a|} T_\be g_\be(d_1,d_2)\p^{\be},$$ where
$$g_\a(d_1,d_2)=1,T_\mathbf{0}=1,T_\be=L_\be,0\leq|\be|\leq|\a|,\be\neq\mathbf{0},$$
$$ \aligned g_\mathbf{0}(d_1,d_2)=&(-1)^{|\a|}\a_1(\a_2+1)\prod_{i=-1}^{\a_1-1}(d_1-i)
\prod_{j=0}^{\a_2-1}(d_2-j)\\
&+(-1)^{|\a|-1}(\a_1+1)\a_2\prod_{i=0}^{\a_1-1}(d_1-i)\prod_{j=-1}^{\a_2-1}(d_2-j), \endaligned$$
and $$g_\be(d_1,d_2)=(-1)^{|\a-\be|}
\binom{\a+\mathbf{1}}{\be+\mathbf{1}}
\prod_{m_1=0}^{\a_1-\be_1-1}(d_1-m_1)
\prod_{m_2=0}^{\a_2-\be_2-1}(d_2-m_2).$$

Using $(\a_1-\be_1)\binom{\a_1+1}{\be_1+1}=(\be_1+2)\binom{\a_1+1}{\be_1+2}$, we can check that
 $$g_{\mathbf{0}}(d_1+1,d_2)-g_{\mathbf{0}}(d_1,d_2)
 +2(d_1-d_2)g_{1,0}(d_1,d_2)-2g_{1,-1}(d_1,d_2+1)=0, $$
and $$g_{\be}(d_1+1,d_2)-g_{\be}(d_1,d_2)+(\be_1+2)g_{\be+e_1}(d_1,d_2)=0,$$ for $0\leq|\be|<|\a|$ with $\be\neq\mathbf{0}$.

Then
$$\aligned \
[\p_1,Y_{\a}]=&[\p_1,\sum_{0\leq|\be|\leq|\a|} T_\be g_\be(d_1,d_2)\p^{\be}]\\
=&\sum_{0\leq|\be|\leq|\a|\atop \be\neq\mathbf{0}}(1+\be_1)L_{\be-e_1} g_\be(d_1,d_2)\p^{\be}
+(g_\mathbf{0}(d_1+1,d_2)-g_\mathbf{0}(d_1,d_2))\p^{e_1}\\
&+\sum_{0\leq |\be| \leq |\a|\atop \be\neq\mathbf{0}} L_\be(g_\be(d_1+1,d_2)-g_\be(d_1,d_2))\p^{\be+e_1}\\
=&\sum_{0\leq|\be|<|\a|\atop \be\neq\mathbf{0}}L_\be(g_{\be}(d_1+1,d_2)-g_{\be}(d_1,d_2)
+(\be_1+2)g_{\be+e_1}(d_1,d_2))\p^{\be+e_1}\\
&+2L_{(0,-1)}g_{(1,-1)}(d_1,d_2)\p^{e_1-e_2}+L_\a(g_\a(d_1+1,d_2)-g_\a(d_1,d_2))\p^{\a+e_1}\\
&+2L_{\mathbf{0}} g_{(1,0)}(d_1,d_2)\p^{e_1}
+(g_\mathbf{0}(d_1+1,d_2)-g_\mathbf{0}(d_1,d_2))\p^{e_1}\\
=&\sum_{0\leq|\be|<|\a|\atop \be\neq\mathbf{0}}L_\be(g_{\be}(d_1+1,d_2)-g_{\be}(d_1,d_2)
+(\be_1+2)g_{\be+e_1}(d_1,d_2))\p^{\be+e_1}\\
&+(2L_{\mathbf{0}} g_{(1,0)}(d_1,d_2)-2g_{(1,-1)}(d_1,d_2+1)
+g_\mathbf{0}(d_1+1,d_2)-g_\mathbf{0}(d_1,d_2))\p^{e_1}\\
=&0.
\endaligned$$
Similarly, we can compute that $[\p_2,Y_\a]=0$.
\end{proof}

It is easy to verify that $\bS$ is generated by the elements
$L_{(0,-1)},L_{(-1,0)},d_2,L_{(1,-1)}$, $L_{(-1,1)},L_{(1,0)}$ and $L_{(0,1)}$.
Let $B$ be the subalgebra of $U_{(-1)}$
generated by $\p_i, d_i, \p_i^{-1}, i=1,2$.
The following result gives a tensor product decomposition of $U_{(-1)}$.

\begin{Theorem}\label{tensor-iso} $U_{(-1)} =BH\cong B\otimes H$.
\end{Theorem}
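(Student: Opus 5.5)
We need to prove $U_{(-1)}=BH\cong B\otimes H$. The plan is to show two things: that $U_{(-1)}$ is generated by $B$ together with the elements $Y_\alpha$, which lie in $H$ by Lemma \ref{H-generator}; and that multiplication $B\otimes H\to U_{(-1)}$ is injective.

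\textbf{Step 1: structure of $B$.} The algebra $B$ is generated by $\p_i^{\pm1}$ and $d_i=t_i\p_i$. Setting $t_i=d_i\p_i^{-1}$ inside $U_{(-1)}$, the relation $[\p_i,d_i]=\p_i$ gives $[\p_i,t_i]=1$. So $B$ is a localized Weyl algebra $\mathcal D_2[\p_1^{-1},\p_2^{-1}]$. From the relations one reads off the basis $\{h^{\bm}\p^{\gamma}\mid \bm\in\Z_+^2,\gamma\in\Z^2\}$. By the PBW theorem for $U(\bS)$ and flatness of Ore localization, these monomials remain linearly independent in $U_{(-1)}$, so $B\cong\mathcal D_2[\p^{-1}]$. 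In particular $B$ is simple and its centralizer data are well understood.

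\textbf{Step 2: $U_{(-1)}=BH$.} Let $H'$ be the subalgebra generated by the $Y_\a$. Each $Y_\a$ has the form
\[L_\a\p^\a+\sum_{|\be|<|\a|}(\text{lower }L_\be)\cdot(\text{elements of }B).\]
Solving for $L_\a$ gives
\[L_\a=Y_\a\p^{-\a}-\sum_{|\be|<|\a|}(\cdots).\]
We then argue by induction on $|\a|$ that every $L_\a$ with $|\a|\ge0$, $\a\neq\mathbf 0$, lies in $BH'$. The base cases are:
\begin{itemize}
\item $L_{(0,-1)}$ and $L_{(-1,0)}$, which are multiples of $\p_i$ and so lie in $B$;
\item $L_{\mathbf 0}=d_1-d_2\in B$;
\item $d_2\in B$.
\end{itemize}
Since $\bS$ is generated by $L_{(0,-1)},L_{(-1,0)},d_2,L_{(1,-1)},L_{(-1,1)},L_{(1,0)},L_{(0,1)}$, it suffices to know these lie in $BH'$, which reduces to the four explicit formulas \eqref{x-w-def}. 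We also need $BH'$ to be a subalgebra, i.e.\ closed under multiplication. This follows because $H'\subseteq H$ commutes with $\p_i$ and $d_i$, hence with $\p_i^{-1}$ and all of $B$. Consequently $BH'=H'B$ is a subalgebra containing $U(\bS)$ and $\p_i^{-1}$, so $U_{(-1)}=BH'\subseteq BH$.

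\textbf{Step 3: injectivity of $B\otimes H\to U_{(-1)}$.} This is the main obstacle. Because $H$ centralizes $B$ and $B\cong\mathcal D_2[\p^{-1}]$ is a simple algebra with center $\C$, a standard argument applies: suppose $\sum b_k h_k=0$ with the $b_k$ linearly independent and the $h_k$ not all zero, with the number of terms minimal. Apply the operators $\mathrm{ad}\,\p_i$ and $\mathrm{ad}\,t_i$ (which act as derivations of $B$ killing $H$). Using a density argument in the simple $B$-bimodule, we can kill all but one term, giving $1\otimes h=0$, so $h=0$.

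The delicate point is that $\mathrm{ad}\,t_i$ is not obviously an operation on $U_{(-1)}$, since $t_i\notin U(\bS)$. However, $t_i=d_i\p_i^{-1}\in B\subseteq U_{(-1)}$, so the commutator makes sense in $U_{(-1)}$. Does $t_i$ commute with $H$? Elements of $H$ commute with $d_i$ and $\p_i^{\pm1}$, hence with $t_i$. So inner derivations by elements of $B$ kill $H$, and the minimal-counterexample argument goes through: $\sum [x,b_k]h_k=0$ yields a shorter relation unless $[x,b_1]$ is proportional to $b_1$ in a suitable sense. This forces the $b_k$ to be central in $B$, hence scalars, contradicting linear independence.

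As a cross-check, one can instead compare PBW-type bases. $U_{(-1)}$ has a basis of ordered monomials in $L_\a$ ($|\a|\ge0$) times $h^{\bm}\p^\gamma$. The triangularity in Step 2 then shows that the monomials in the $Y_\a$ times $B$-basis elements form a basis. This also yields $H=H'$.
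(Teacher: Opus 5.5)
Your proposal is correct and follows the paper's proof. Surjectivity comes from placing the generators of $\bS$ in $BH$ via the explicit $Y_\a$, using that $BH$ is a subalgebra because $[B,H]=0$; injectivity of $B\otimes H\to U_{(-1)}$ comes from $B\cong\mathcal{D}_2[\p_1^{-1},\p_2^{-1}]$ being central simple, which the paper phrases as $\ker\gamma=B\otimes I$ with $I=0$. Your minimal-relation sketch in Step 3 is muddled, but it is the standard lemma the paper invokes. The correct setup takes the $h_k$ linearly independent and uses simplicity of $B$ to make one coefficient equal to $1$. Only then does applying $\mathrm{ad}\,x$ give a shorter relation, which forces the other coefficients into $Z(B)=\C$ and contradicts the independence of the $h_k$.
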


\begin{proof} Clearly $BH \subseteq U_{(-1)}$. To show that $U_{(-1)} \subseteq BH$, it is sufficient to prove that the generators of $\bS$ belong to $BH$. However, $\bS$ is generated by $L_{(0,-1)},L_{(-1,0)},d_2,L_{(1,-1)}$, $L_{(-1,1)},L_{(1,0)}$ and $L_{(0,1)}$. These generators belong to
$BH$, since
$$\aligned L_{(0,-1)}&=-\p_2\in BH,\quad  L_{-1,0}=\p_1\in BH,\quad d_2\in BH,\\
L_{(1,-1)}&=-2t_1\p_2=  Y_{(1,-1)}\p_1^{-1}\p_2-2(t_1\p_1)\p_1^{-1}\p_2\in BH ,\\
L_{(-1,1)}&=2t_2\p_1=  Y_{(-1,1)}\p_1\p_2^{-1}+2(t_2\p_2)\p_1\p_2^{-1} \in BH,\\
L_{(1,0)}&
=Y_{(1,0)}\p_1^{-1}-2t_1\p_2(t_2\p_2)\p_1\p_2^{-1}\p_1^{-1}
+(t_1\p_1)^2\p_1^{-1}+t_1\p_1\p_1^{-1}\in BH,\\
L_{(0,1)}&
=Y_{(0,1)}\p_2^{-1}+2t_2\p_1(t_1\p_1)\p_2\p_1^{-1}\p_2^{-1}
-(t_2\p_2)^2\p_2^{-1}-t_2\p_2\p_2^{-1}\in BH.
\endaligned$$
So $U_{(-1)} =BH$.
Since $[B,H]=0$,   we can define the algebra homomorphism
$\gamma: B\otimes H\rightarrow U_{(-1)}$ such that
$\gamma(b\otimes u)=bu$ for any $b\in B, u\in H$.
Since $U_{(-1)} =BH$, $\gamma$ is surjective. Since
$B$ is  a simple algebra, $\ker \gamma=B\otimes I$ for some ideal of
$I$ of $H$. Since $\gamma|_H=\mathrm{Id}_H$,  we must have that $I=0$.
Thus $\ker \gamma=0$, and hence $\gamma$ is injective. Therefore $\gamma$ is isomorphic.
\end{proof}

\begin{Theorem}\label{endo}
We have that $H\cong H_{\m1}$.
\end{Theorem}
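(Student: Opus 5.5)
The plan is to identify $H_{\m1}=\text{End}_{\bS}(Q_{\m1})^{\text{op}}$ with $\text{Wh}_{\m1}(Q_{\m1})$, as in the proof of Theorem \ref{sk}, and then to realize $\text{Wh}_{\m1}(Q_{\m1})$ as a quotient of $U_{(-1)}$ on which $H$ acts freely of rank one.

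\emph{Step 1: realize $Q_{\m1}$ via $U_{(-1)}$.} Since $\p_i$ acts on $Q_{\m1}$ locally finitely with generalized eigenvalue $1$, it acts invertibly. Because $\p$ is an Ore set, $Q_{\m1}$ extends uniquely to a $U_{(-1)}$-module. By the universal property we obtain
$$Q_{\m1}\cong U_{(-1)}\otimes_{U(\Delta_2)_{\p}}\C_{\m1},$$
where $U(\Delta_2)_\p=\C[\p_1^{\pm1},\p_2^{\pm1}]$. Flatness of Ore localization and the PBW basis give the inverse map.

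\emph{Step 2: use Theorem \ref{tensor-iso}.} Write $U_{(-1)}=B\otimes H$. The algebra $B$ is generated by $\p_i^{\pm1}$ and $d_i$, so it is the localized Weyl-type algebra $\C[d_1,d_2]\otimes \C[\p_1^{\pm1},\p_2^{\pm1}]$ with $[\p_i,d_j]=\delta_{ij}\p_i$. It is free as a right $\C[\p_1^{\pm1},\p_2^{\pm1}]$-module with basis $h^{\bm}$. Hence
$$Q_{\m1}\cong (B\otimes_{\C[\p^{\pm1}]}\C_{\m1})\otimes H\cong U(\mh)\otimes H,$$
where $H$ acts by right multiplication; this commutes with the left $B\otimes H$-action only after the op is taken. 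The $\Delta_2$-action lives entirely on the $B$-factor, which is $\C[d_1,d_2]v$ with $\p_i$ acting as $d_i\mapsto d_i+1$ composed with evaluation. Its Whittaker vectors of type $\m1$ are exactly the constants: a polynomial $f$ with $f(d_1+1,d_2)=f$ and the analogous condition in $d_2$ must be constant. Therefore $\text{Wh}_{\m1}(Q_{\m1})=1\otimes H\cdot v_{\m1}$, and $H\to \text{Wh}_{\m1}(Q_{\m1})$, $u\mapsto u v_{\m1}$, is a linear bijection.

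\emph{Step 3: compare multiplications.} Each $u\in H$ defines an endomorphism $\theta_u$ of $Q_{\m1}$ by $x v_{\m1}\mapsto x u v_{\m1}$. This is well defined because $u$ commutes with $\p_i$, so $(\p_i-1)u v_{\m1}=0$ and $u v_{\m1}$ is a Whittaker vector. We then have $\theta_u\theta_{u'}(v_{\m1})=u' u v_{\m1}$, so $u\mapsto\theta_u$ is an anti-homomorphism $H\to \text{End}_{\bS}(Q_{\m1})$, that is, a homomorphism $H\to H_{\m1}$. It is bijective by Step 2 together with $\text{End}(Q_{\m1})\cong \text{Wh}_{\m1}(Q_{\m1})$.

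The main obstacle is Step 2: justifying that, under the tensor decomposition, the induced module is $U(\mh)\otimes H$ with $H$ acting faithfully, and that no extra Whittaker vectors arise. I would handle this by showing the commutation $[H,B]=0$ lets the left $H$-factor pass freely through $\otimes_{\C[\p^{\pm1}]}$, and then computing the Whittaker vectors of $B\otimes_{\C[\p^{\pm1}]}\C_{\m1}\cong\C[d_1,d_2]$ directly.
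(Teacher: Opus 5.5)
Your proposal is correct and is essentially the paper's argument: both rest on the decomposition $Q_{\mathbf{1}}\cong B_{\mathbf{1}}\otimes H$ with $B_{\mathbf{1}}=\mathbb{C}[d_1,d_2]v_{\mathbf{1}}$, obtained from Theorem \ref{tensor-iso}, and on the shift formula $(\partial_i-1)g(d_1,d_2)v_{\mathbf{1}}=\bigl(g(d_1+\delta_{1,i},d_2+\delta_{2,i})-g(d_1,d_2)\bigr)v_{\mathbf{1}}$. The only difference is the last step. The paper uses the shift formula to show $B_{\mathbf{1}}$ is simple and then applies Schur's lemma to get $\mathrm{End}_{B\otimes H}(B_{\mathbf{1}}\otimes H)^{\mathrm{op}}\cong H$, whereas you use it to get $\mathrm{Wh}_{\mathbf{1}}(Q_{\mathbf{1}})=Hv_{\mathbf{1}}$ and conclude via $\mathrm{End}_{\bar{S}_2}(Q_{\mathbf{1}})\cong\mathrm{Wh}_{\mathbf{1}}(Q_{\mathbf{1}})$ and the explicit anti-isomorphism $u\mapsto\theta_u$; your route also spells out the $U_{(-1)}$-module structure on $Q_{\mathbf{1}}$ that the paper uses tacitly, and you should state that $\theta_u(u'v_{\mathbf{1}})=u'uv_{\mathbf{1}}$ holds because $\bar{S}_2$-endomorphisms commute with the bijections $\partial_i^{\pm1}$.
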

\begin{proof}  Let $B_{\mathbf{1}}=B\otimes_{U(\Delta_2)}\mathbb{C}_{\mathbf{1}}$. We know that $B_{\mathbf{1}}=\mathbb{C}[d_1,d_2]v_{\mathbf{1}}$ as a vector space. Since
$$(\partial_i-1)g(d_1,d_2)v_{\mathbf{1}}=(g(d_1+\delta_{1,i},d_2+\delta_{2,i})-g(d_1,d_2))v_{\mathbf{1}},$$
where $g(d_1,d_2)\in \mathbb{C}[d_1,d_2]$, $i=1,2$,  any nonzero $B$-submodule $V'$ of $B_\mathbf{1}$ must contain $v_{\mathbf{1}}$. So $V'=B_{\mathbf{1}}$ and $B_{\mathbf{1}}$ is an irreducible $B$-module.
Therefore, by Theorem \ref{tensor-iso},
 we obtain that the module $Q_{\mathbf{1}}$ is isomorphic to $B_{\mathbf{1}}\otimes H$. According to Schur's Lemma, we have $\text{End}_{B}(B_{\mathbf{1}}) \cong \mathbb{C}$, thus $H_{\mathbf{1}} \cong \text{End}_{B\otimes H}(B_{\mathbf{1}}\otimes H)^{\text{op}} \cong \text{End}_{H}(H)^{\text{op}} \cong H$.
\end{proof}


By Theorem \ref{tensor-iso} and Lemma \ref{H-generator}, we obtain a PBW type basis of $H$.

\begin{Corollary}\label{PBW}
The ordered monomials  in $Y_\a\in H$, for $\a\in \Z_{\geq-1}^2$ with $|\a|\geq0$ and $\a\neq \mathbf{0}$, form a basis of $H$. Moreover, the set $$\{Y_{(1,-1)},Y_{(-1,1)},Y_{(1,0)},Y_{(0,1)}\}$$
 is a set of generators for the algebra $H$.
\end{Corollary}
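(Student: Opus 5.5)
The plan is to compare $U_{(-1)}\cong B\otimes H$ (Theorem \ref{tensor-iso}) with the PBW basis of $U_{(-1)}$. First, a PBW basis of $U_{(-1)}$: the ordered monomials
$$\p^{\mathbf{k}}h^{\bm}\prod L_\a^{n_\a}, \qquad \mathbf{k}\in\Z^2,\ \bm\in\Z_+^2,$$
where $\a$ runs over $\Z_{\geq-1}^2$ with $|\a|\geq 0$, $\a\neq\mathbf{0}$, form a basis. This holds because $\bS=\Delta_2\oplus\mh\oplus\mathrm{span}\{L_\a\}$ and localization at the Ore set $\p$ preserves PBW bases. Similarly, $B$ has basis $\{\p^{\mathbf{k}}h^{\bm}\}$, so $B$ is a localized Weyl-type algebra.

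Next, each $Y_\a$ has the form $Y_\a=L_\a\p^{\a}+(\text{lower terms})$, where the lower terms involve $L_\be$ with $|\be|<|\a|$, or no $L$ at all, multiplied by elements of $B$. Order monomials by total $d$-degree $\sum n_\a|\a|$ and then by the number of $L$ factors. Then an ordered monomial $\prod Y_\a^{n_\a}$ equals $\p^{\sum n_\a\a}\prod L_\a^{n_\a}$ plus terms that are strictly smaller in the filtration, with coefficients in $B$. Commutators only lower the filtration, because $[L_\a,L_\be]$ involves $L_{\a+\be}$ with fewer factors and $[L_\a,B]\subseteq \sum B L_\ga$ with $|\ga|\leq|\a|$. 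A triangularity argument then shows that the set
$$\Big\{\p^{\mathbf{k}}h^{\bm}\prod Y_\a^{n_\a}\Big\}$$
is a basis of $U_{(-1)}$: it is linearly independent because leading terms are distinct PBW monomials after absorbing the invertible $\p^{\a}$, and it spans by induction on the filtration.

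It follows that $U_{(-1)}=\bigoplus_{\mathbf{n}} B\cdot Y^{\mathbf{n}}$, with each summand $B$-free of rank one. Since $\gamma\colon B\otimes H\to U_{(-1)}$ is an isomorphism, $H$ is spanned by the ordered $Y$-monomials. Concretely, write $u\in H$ as $\sum b_{\mathbf{n}}Y^{\mathbf{n}}$. Then $u=\gamma(1\otimes u)$, and also $u=\gamma\big(\sum b_{\mathbf{n}}\otimes Y^{\mathbf{n}}\big)$ with $Y^{\mathbf{n}}\in H$ by Lemma \ref{H-generator}. Injectivity of $\gamma$, together with linear independence of the $Y^{\mathbf{n}}$, forces $b_{\mathbf{n}}\in\C$. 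Linear independence in $H$ is inherited from $U_{(-1)}$. This proves the first assertion.

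For generation, I will show by induction on $|\a|$ that every $Y_\a$ lies in the subalgebra generated by the four listed elements. For $|\a|=0$, the elements $Y_{(1,-1)}$ and $Y_{(-1,1)}$ are given. For $|\a|\geq 1$, I would compute the commutator $[Y_{(1,0)},Y_\be]$ (or $[Y_{(0,1)},Y_\be]$) with $|\be|=|\a|-1$. Its leading term is
$$[L_{(1,0)},L_\be]\,\p^{\be+e_1}=c\,L_{\be+e_1}\p^{\be+e_1},$$
where $c$ is the determinant appearing in the bracket formula for the $L_\a$. Hence the commutator equals $c\,Y_{\be+e_1}$ plus an element of $H$ of lower filtration. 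By the basis just established, that lower element is a polynomial in $Y_\ga$ with $|\ga|<|\a|$, which by induction already lie in the generated subalgebra.

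The main obstacle is ensuring that the coefficient $c$ is nonzero for some choice of $\be$ and direction. For $\a=(a_1,a_2)$, I would choose among $\be=\a-e_1$ with $Y_{(1,0)}$ and $\be=\a-e_2$ with $Y_{(0,1)}$. The determinant reduces to $|\a|+1$-type expressions, which I expect to be nonzero for at least one choice. Degenerate edge cases such as $\a=(-1,k)$ I would handle by commuting with $Y_{(-1,1)}$ instead.
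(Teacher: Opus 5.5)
Your proposal is correct, and for the basis assertion it supplies an argument the paper does not give. The paper's whole justification is that the corollary follows from Theorem \ref{tensor-iso} and Lemma \ref{H-generator}. Those two results alone do not show that the ordered $Y$-monomials are independent and spanning. Your triangularity argument is the missing step: left $B$-freeness of $U_{(-1)}$ on ordered $L$-monomials, the identity $\mathrm{gr}\,Y^{\mathbf n}=\p^{\sum n_\a\a}\bar L^{\mathbf n}$ with $\p^{\sum n_\a\a}$ a unit of $B$, and descent to $H$ via injectivity of $\gamma$. One small correction: $[L_\a,d_i]=-\a_iL_\a$ does not lower the filtration, so ``commutators only lower the filtration'' is overstated. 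What you actually use is that $\p_i^{\pm1}$ commutes with $L_\a$ modulo lower $D$-degree, and that no $d_i$ occurs in the leading terms $L_\a\p^\a$. Your choice to order first by $D=\sum n_\a|\a|$ matters later. It places quadratic terms such as $L_{(1,0)}L_{\be-e_1}$ and $L_\be L_{(1,-1)}$ strictly below $L_{\be+e_1}$ in the commutator.

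For generation the two routes genuinely differ. The paper's implicit argument reuses the proof of Theorem \ref{tensor-iso}, where $L_{(1,-1)},L_{(-1,1)},L_{(1,0)},L_{(0,1)}$ are expressed through $B$ and only the four listed $Y$'s. So the subalgebra $H'$ generated by these four satisfies $BH'=U_{(-1)}$. Injectivity of $\gamma$ then gives $B\otimes H'=B\otimes H$, hence $H'=H$. This needs neither the PBW basis nor any structure constants beyond the fact that the seven listed elements generate $\bS$.

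Your induction on $|\a|$ via $[Y_{(1,0)},Y_{\a-e_1}]$ or $[Y_{(0,1)},Y_{\a-e_2}]$ needs two things. First, the filtered form of the basis result. Second, the nonvanishing of the leading coefficient, which you left open. It does hold, but not as ``$|\a|+1$-type'' expressions. The coefficients are $\a_1-2\a_2-2$ and $2\a_1-\a_2+2$ respectively. If $\a_1=-1$ the second equals $-\a_2\neq0$; if $\a_2=-1$ the first equals $\a_1\neq0$; and both vanishing would force $\a_1=-2$. So $(-1,k)$ is not degenerate, and the detour through $Y_{(-1,1)}$ is unnecessary (it would not raise $|\a|$ anyway).

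Your route buys an explicit recursion expressing each $Y_\a$ through the four generators. The paper's route is shorter and avoids that computation.
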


Let $I_{\mathbf{1}}$ be the left ideal of $U(\bS)$ generated by $\p_1-1,\p_2-1$.  Since  as a vector space, $Q_{\m1}\cong U(\bS)/I_{\mathbf{1}}\cong U(\bS^{\geq 0})$, we have that
$H_{\m1}=\text{End}_{\bar{S}_2}(Q_{\m1})^{\text{op}}\cong U(\bS^{\geq 0})^{\Delta_2}$,
where $U(\bS^{\geq 0})^{\Delta_2}=\{u\in U(\bS^{\geq 0})| [\p_1-1,u], [\p_2-1, u]\in I_{\mathbf{1}}\}$. Then by the isomorphism $H\cong H_{\mathbf{1}}$,
there is an isomorphism $\xi: H\rightarrow U(\bS^{\geq 0})^{\Delta_2}$
such that
\begin{align*} \label{x-operator}
 \xi(Y_{\a})= &L_\a
+\sum_{0\leq|\be|<|\a|\atop \be\neq \mathbf{0}}(-1)^{|\a-\be|}
\binom{\a+\mathbf{1}}{\be+\mathbf{1}}L_\be
\prod_{m_1=0}^{\a_1-\be_1-1}(d_1-m_1)
\prod_{m_2=0}^{\a_2-\be_2-1}(d_2-m_2)\\
&+(-1)^{|\a|}\a_1(\a_2+1)\prod_{i=-1}^{\a_1-1}(d_1-i)
\prod_{j=0}^{\a_2-1}(d_2-j)\\
&+(-1)^{|\a|-1}(\a_1+1)\a_2\prod_{i=0}^{\a_1-1}(d_1-i)
\prod_{j=-1}^{\a_2-1}(d_2-j),\end{align*}
for any $\a\in \Z_{\geq-1}^2$ with $|\a|\geq0$ and $\a\neq\mathbf{0}$.
For example,
 \begin{equation}\label{x-w-def1}
 \aligned  \xi(Y_{(1,-1)})= &\  (-2t_1\p_2)+2t_1\p_1 ,\\
 \xi(Y_{(-1,1)})= &(2t_2\p_1)-2t_2\p_2 ,\\
 \xi(Y_{(1,0)})=&(t_1^2\p_1-2t_1t_2\p_2)+2t_1\p_2(t_2\p_2)-(t_1\p_1)^2-t_1\p_1, \\
 \xi(Y_{(0,1)})=&(2t_1t_2\p_1-t_2^2\p_2)-2t_2\p_1(t_1\p_1)+(t_2\p_2)^2+t_2\p_2. \endaligned\end{equation}
 Consider the composition $\xi: H\rightarrow U(\bS^{\geq 0})^{\Delta_2}$
 and $\pi:\bS^{\geq 0}\rightarrow \bS^{\geq 0}/\bS^{\geq 1} \cong \gl_2$,
 we obtain an algebra homomorphism $\pi_1: H\rightarrow U(\gl_2)$
 such that

 \begin{equation}
 \aligned  \pi_1(Y_{(1,-1)})= &\  -2E_{12}+2E_{11} ,\\
 \pi_1(Y_{(-1,1)})= &2E_{21}-2E_{22} ,\\
 \pi_1(Y_{(1,0)})=&2E_{12}E_{22}-E_{11}^2-E_{11}, \\
 \pi_1(Y_{(0,1)})=&-2E_{21}E_{11}+E_{22}^2+E_{22}. \endaligned\end{equation}
 Then any $\gl_2$-module $V$ can be defined to be an $H$-module denoted by $V^{\pi_1}$ through the map $\pi_1$.

 By Theorems \ref{simple-S},\ref{sk} and \ref{endo}, we have the following result.

\begin{Theorem}\label{H-mod}
If $M$ is a simple finite-dimensional $H$-module, then $M$ is isomorphic to
a simple quotient of $V^{\pi_1}$, for some simple finite-dimensional $\gl_2$-module $V$.
\end{Theorem}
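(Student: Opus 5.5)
Let $M$ be a simple finite-dimensional $H$-module. Through the isomorphism $H\cong H_{\m1}$ of Theorem \ref{endo}, regard $M$ as a simple object of $H_{\m1}$-fmod. By Theorem \ref{sk}, $N:=Q_{\m1}\otimes_{H_{\m1}}M$ then lies in $\Omega^{\bS}_{\m1}$, and $\mathrm{Wh}_{\m1}(N)\cong M$. Since the two functors of Theorem \ref{sk} are mutually inverse equivalences, they preserve simplicity; hence $N$ is a simple object of $\Omega^{\bS}_{\m1}$. Any nonzero submodule of $N$ is again a Whittaker module of type $\phi_{\m1}$ with finite-dimensional Whittaker space, so it lies in $\Omega^{\bS}_{\m1}$. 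Therefore $N$ is a simple $\bS$-module.

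By Theorem \ref{simple-S}, there is a finite-dimensional simple $\gl_2$-module $V$ and an $\bS$-epimorphism $g:T(A_2^{\m1},V)\to N$. Since $\p_1,\p_2$ act on Whittaker spaces by the scalar $1$, the functor $\mathrm{Wh}_{\m1}$ is exact on $\Omega^{\bS}_{\m1}$: a short exact sequence of generalized eigenspace modules restricts exactly to eigenvectors because, by Lemma \ref{basis}, each module is $U(\mh)\otimes\mathrm{Wh}_{\m1}$. Hence $g$ induces a surjective $H_{\m1}$-homomorphism
\[
\mathrm{Wh}_{\m1}(T(A_2^{\m1},V))\to \mathrm{Wh}_{\m1}(N)\cong M .
\]
By the remark after Theorem \ref{AS-iso}, $\mathrm{Wh}_{\m1}(T(A_2^{\m1},V))=1\otimes V$, so it suffices to show that the $H$-action on $1\otimes V$ is exactly $V^{\pi_1}$.

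This identification is the main step. An element $h\in H\cong H_{\m1}\cong U(\bS^{\ge0})^{\Delta_2}$ acts on a Whittaker vector $w$ by $\xi(h)w$. This holds because the endomorphism of $Q_{\m1}$ attached to $h$ sends $v_{\m1}$ to $\xi(h)v_{\m1}$, and the action of $H_{\m1}$ on $\mathrm{Wh}_{\m1}(X)=\mathrm{Hom}(Q_{\m1},X)$ is composition. It then suffices to compute $\xi(Y_\a)(1\otimes v)$ using the explicit $T$-action, where $L_\a(1\otimes v)$ involves $L_\a\cdot 1$ in $A_2^{\m1}$ together with the $\gl_2$ terms.

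A cleaner route uses Theorem \ref{AS-iso}. On $T(A_2^{\m1},V)$, the element $\phi(u)$ for $u\in\bS^{\ge0}$ acts as $u\otimes1+\sum t^r\otimes(\cdot)$. Invariance under $\Delta_2$ forces the image of $\xi(h)$ on $1\otimes V$ to act only through the $U(\bS^{\ge0})$-tensor factor, and there it acts by $\pi(\xi(h))=\pi_1(h)$, since $\bS^{\ge1}V=0$ by Lemma \ref{gl2}. To confirm this I would check it on the four generators $Y_{(1,-1)},Y_{(-1,1)},Y_{(1,0)},Y_{(0,1)}$ of Corollary \ref{PBW}, comparing the displayed formulas for $\xi(Y_\a)$ and $\pi_1(Y_\a)$. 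The obstacle I expect is that the $A_2^{\m1}$ factor contributes correction terms, for example $t_1\p_1\cdot1=t_1$ in $A_2^{\m1}$. These corrections must cancel exactly, which is what the lower-order terms in $Y_\a$ are designed to achieve. If a constant shift survives, I would absorb it by replacing $V$ with $V$ twisted by a character of $\gl_2$, which keeps $V$ finite-dimensional and simple.

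Once the identification holds, $M$ is a simple quotient of $V^{\pi_1}$.
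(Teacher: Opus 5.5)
Your route is the paper's. The paper proves this theorem only by citing Theorems \ref{simple-S}, \ref{sk} and \ref{endo}, and your first two paragraphs unpack that citation correctly: you pass to the simple module $N=Q_{\mathbf{1}}\otimes_{H_{\mathbf{1}}}M\in\Omega^{\bS}_{\mathbf{1}}$, realize it as a quotient of $T(A_2^{\mathbf{1}},V)$, and apply the exact functor $\mathrm{Wh}_{\mathbf{1}}$. Your observation that $h\in H$ acts on Whittaker vectors by $w\mapsto\xi(h)w$ is also right.

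What is missing is a proof of the step you yourself call the main one. You state that $H$ acts on $1\otimes V$ through $\pi_1$, but then only propose to check generators and hedge against a possible ``constant shift''. The repair you offer would not work in general. A character of $\gl_2$ is a multiple of the trace, so twisting $V$ by one replaces $E_{11},E_{22}$ by $E_{11}+c,E_{22}+c$ simultaneously. That changes $\pi_1(Y_{(1,-1)})$ and $\pi_1(Y_{(-1,1)})$ too, so it cannot absorb a discrepancy in a single generator.

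The identification is in fact exact, with no shift, and your ``cleaner route'' becomes a proof as follows. Let $\epsilon:A_2^{\mathbf{1}}\otimes V\to V$ be $p\otimes v\mapsto p(0)v$. Look at the explicit formulas after Lemma \ref{gl2} for the action of $L_\a$ ($|\a|\geq 0$) and $d_2$ on $T(A_2^{\mathbf{1}},V)$:
\begin{itemize}
\item The term $L_\a p$ (resp.\ $d_2p$) equals $\sum_i f_i(\p_i+1)p$ with $f_i(0)=0$, so it has no constant term.
\item Each remaining term $t^{\gamma}p\otimes(\cdot)$ contributes to the constant term only when $\gamma=0$. Those contributions are exactly $\pi(L_\a)v$ (resp.\ $E_{22}v$).
\end{itemize}
Hence $\epsilon$ is a homomorphism of $\bS^{\geq 0}$-modules, so $\epsilon(ux)=\pi(u)\epsilon(x)$ for all $u\in U(\bS^{\geq 0})$.

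Now take $h\in H$. Since $(\p_i-1)\xi(h)\in I_{\mathbf{1}}$, the vector $\xi(h)(1\otimes v)$ is a Whittaker vector, so it lies in $1\otimes V$. On $1\otimes V$, $\epsilon$ is just the identification $1\otimes w\mapsto w$. Therefore
\[
\xi(h)(1\otimes v)=1\otimes\pi(\xi(h))v=1\otimes\pi_1(h)v .
\]
A direct check on $Y_{(1,0)}$ agrees: every term with nonconstant polynomial part cancels, leaving $1\otimes(2E_{12}E_{22}-E_{11}^2-E_{11})v$. With this argument inserted, your proof is complete and more explicit than the paper's.
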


For a simple finite-dimensional $\gl_2$-module $V$, we can check that the $H$-module $V^{\pi_1}$ is reducible if and only if
$V\cong \C^2$. The subspace $\C(e_1+e_2)$ is an  $H$-submodule $(\C^2)^{\pi_1}$.

\vspace{2mm}

\noindent
{\bf Acknowledgments. }This research is supported  by National Natural Science Foundation of China (Grants
12371026).

\vspace{4mm}

 \noindent  X. Z.: School of Mathematics and Statistics,
Henan University, Kaifeng 475004, China. Email: XYZheng@henu.edu.cn

\noindent  Y. Z.: School of Mathematical Sciences, Hebei Normal University, Shijiazhuang 050024, China. E-mail: zhaoyf@hebtu.edu.cn

 \noindent G. L.: School of Mathematics and Statistics,
Henan University, Kaifeng 475004, China. Email: liugenqiang@henu.edu.cn

\vspace{0.2cm}

\end{document}